\theoremstyle{plain}
\newtheorem{thm}{Theorem}[section]
\newtheorem{prop}[thm]{Proposition}
\newtheorem{lem}[thm]{Lemma}
\newtheorem{cor}[thm]{Corollary}
\newtheorem{conj}[thm]{Conjecture}
\theoremstyle{definition}
\newtheorem{exmp}[thm]{Example}
\newtheorem{dfns-rems}[thm]{Definitions and Remarks}
\newtheorem{notas-rems}[thm]{Notations and Remarks}
\newtheorem{exmps-rems}[thm]{Examples and Remarks}
\begin{document}


\title[Regularity of small symbolic powers of edge ideals]{On the regularity of small symbolic powers of edge ideals of graphs}


\author[S. A. Seyed Fakhari]{S. A. Seyed Fakhari}

\address{S. A. Seyed Fakhari, School of Mathematics, Statistics and Computer Science,
College of Science, University of Tehran, Tehran, Iran, and Institute of Mathematics, Vietnam Academy of Science and Technology, 18 Hoang Quoc Viet, Hanoi, Vietnam.}

\email{aminfakhari@ut.ac.ir}


\begin{abstract}
Assume that $G$ is a graph with edge ideal $I(G)$ and let $I(G)^{(s)}$ denote the $s$-th symbolic power of $I(G)$. It is proved that for every integer $s\geq 1$, $${\rm reg}(I(G)^{(s+1)})\leq \max\bigg\{{\rm reg}(I(G))+2s, {\rm reg}\big(I(G)^{(s+1)}+I(G)^s\big)\bigg\}.$$As a consequence, we conclude that ${\rm reg}(I(G)^{(2)})\leq {\rm reg}(I(G))+2$, and ${\rm reg}(I(G)^{(3)})\leq {\rm reg}(I(G))+4$. Moreover, it is shown that if for some integer $k\geq 1$, the graph $G$ has no odd cycle of length at most $2k-1$, then ${\rm reg}(I(G)^{(s)})\leq 2s+{\rm reg}(I(G))-2$, for every integer $s\leq k+1$. Finally, it is proven that ${\rm reg}(I(G)^{(s)})=2s$, for $s\in \{2, 3, 4\}$, provided that the complementary graph $\overline{G}$ is chordal.
\end{abstract}


\subjclass[2000]{Primary: 13D02, 05E99}


\keywords{Castelnuovo--Mumford regularity, Edge ideal, Symbolic power}


\thanks{This research is partially funded by the Simons Foundation Grant Targeted for Institute of Mathematics, Vietnam Academy of Science and Technology.}


\maketitle


\section{Introduction} \label{sec1}

Let $\mathbb{K}$ be a field and $S = \mathbb{K}[x_1,\ldots,x_n]$  be the
polynomial ring in $n$ variables over $\mathbb{K}$. Suppose that $M$ is a graded $S$-module with minimal free resolution
$$0  \longrightarrow \cdots \longrightarrow  \bigoplus_{j}S(-j)^{\beta_{1,j}(M)} \longrightarrow \bigoplus_{j}S(-j)^{\beta_{0,j}(M)}   \longrightarrow  M \longrightarrow 0.$$
The Castelnuovo--Mumford regularity (or simply, regularity) of $M$, denoted by ${\rm reg}(M)$, is defined as
$${\rm reg}(M)=\max\{j-i|\ \beta_{i,j}(M)\neq0\},$$
and it is an important invariant in commutative algebra and algebraic geometry.

Cutkosky, Herzog, Trung, \cite{cht}, and independently  Kodiyalam \cite{ko}, proved that for a homogenous ideal $I$ in the polynomial ring, the function ${\rm reg}(I^s)$ is linear, for $s\gg0$, i.e., there exist integers $a(I)$ and $b(I)$ such that $${\rm reg} (I^s)=a(I)s+b(I) \ \ \ \ {\rm for} \ s\gg 0.$$
It is known that $a(I)$ is bounded above by the maximum degree of elements in a minimal generating set of $I$. But a general combinatorial bound for $b(I)$ is not known, even if $I$ is monomial ideal. However, Alilooee, Banerjee, Beyarslan and H${\rm \grave{a}}$ \cite[Cojecture 1]{bbh1} conjectured that $b(I)\leq {\rm reg}(I)-2$, when $I=I(G)$ is the edge ideal of a graph. Indeed, they proposed the following stronger conjecture.

\begin{conj} [\cite{bbh1}, Conjecture 1] \label{conj1}
For every graph $G$ and any integer $s\geq 1$, we have$${\rm reg}(I(G)^s)\leq 2s+{\rm reg}(I(G))-2.$$
\end{conj}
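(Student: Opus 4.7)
The plan is to attempt Conjecture 1.1 by induction on $s$, adapting the colon-ideal technique of Banerjee. The base case $s=1$ is the tautology ${\rm reg}(I(G)) \le {\rm reg}(I(G))$. For the inductive step, assume ${\rm reg}(I(G)^s) \le 2s + {\rm reg}(I(G)) - 2$ and aim to show the analogous bound for $I(G)^{s+1}$. Banerjee's lemma yields
$${\rm reg}\bigl(I(G)^{s+1}\bigr) \le \max\bigl\{{\rm reg}(I(G)^s),\ {\rm reg}\bigl(I(G)^{s+1}:u_j\bigr) + 2s : 1 \le j \le m\bigr\},$$
where $u_1,\dots,u_m$ are the minimal monomial generators of $I(G)^s$, all of degree $2s$. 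The inductive hypothesis handles the first term, so the entire problem collapses to the uniform bound ${\rm reg}\bigl(I(G)^{s+1}:u_j\bigr) \le {\rm reg}(I(G))$ for each $j$; combining this with the inductive hypothesis gives $2s + {\rm reg}(I(G)) = 2(s+1) + {\rm reg}(I(G)) - 2$, which is exactly what is required to close the induction.

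Next I would invoke Banerjee's structural description of these colons: $(I(G)^{s+1}:u_j)$ is again a monomial ideal generated in degree at most two and, after absorbing the variables whose squares appear, coincides with the edge ideal $I(G^{u_j})$ of a graph $G^{u_j}$ obtained from $G$ by adjoining every pair of vertices that are ``even-connected through $u_j$''. Consequently the conjecture reduces to the purely combinatorial inequality
$${\rm reg}\bigl(I(G^u)\bigr) \le {\rm reg}\bigl(I(G)\bigr)$$
for every $s$ and every minimal monomial generator $u$ of $I(G)^s$. I would attack this via Hochster's formula: for each homological degree in which $I(G^u)$ contributes a non-vanishing Betti number, I would try to construct a corresponding witness in $I(G)$ by rerouting the even walks that produced the new edges of $G^u$, so that induced matchings and Hochster cycles in $G^u$ lift to comparable configurations in $G$. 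A complementary strategy would be to exploit the combinatorial regularity bounds of Dao--Huneke--Schweig and Woodroofe (induced matching number and minimum co-chordal cover number), and to verify that each is non-increasing under the even-connection operation $G \mapsto G^u$.

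The decisive obstacle is precisely the inequality ${\rm reg}(I(G^u)) \le {\rm reg}(I(G))$. In general, adding edges to a graph can raise regularity, and at present no invariant of edge ideals is known to be simultaneously monotone under even-connection and to bound regularity from above. If a uniform argument resists, the fallback is to stratify by the combinatorial type of $u$: when $u$ is a product of $s$ pairwise disjoint edges, the graph $G^u$ admits a transparent description and the bound can be extracted from existing gluing and whiskering results; the cases in which $u$ repeats an edge or traverses an even closed walk of $G$ produce new chords that can destroy induced-matching structure and are the source of the difficulty. I expect any proof of the conjecture in its stated form to require a genuinely new handle on the combinatorics of even-connection closures, and this is precisely the gap that the present paper sidesteps by working with $I(G)^{(s)}$, whose corresponding colons admit a much simpler description.
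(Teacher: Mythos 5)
You have not proved the statement, and neither does the paper: \ref{conj1} is an open conjecture quoted from \cite{bbh1}, and the paper contains no proof of it to compare against. Your reduction is the correct and standard one --- Banerjee's ordering of the generators $u_1,\dots,u_m$ of $I(G)^s$ together with the short exact sequences $0\to S/(I_{j-1}:u_j)(-2s)\to S/I_{j-1}\to S/I_j\to 0$ does collapse the inductive step to the single inequality ${\rm reg}(I(G)^{s+1}:u_j)\le {\rm reg}(I(G))$, and $(I(G)^{s+1}:u_j)$ is indeed the edge ideal (after polarization of the squared variables) of the even-connection closure $G^{u_j}$. But the inequality ${\rm reg}(I(G^{u}))\le {\rm reg}(I(G))$ is exactly the open problem in disguise: it is known for $s=1$ (this is \cite[Theorem 3.1]{bn}, used in Lemma \ref{base} of this paper) and for various special classes (gap-free plus cricket-free, unicyclic, bipartite, etc.), but no argument via Hochster's formula, induced matching numbers, or co-chordal covers is known to survive the even-connection operation for general $G$ and general $u\in G(I(G)^s)$ with $s\ge 2$. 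Your proposal names this obstacle honestly and offers only heuristics for overcoming it, so there is a genuine gap and the argument is not a proof.

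It is worth noting that the contribution of the present paper is precisely the symbolic-power analogue of the step you cannot carry out: Lemma \ref{col} shows that the colon ideals $(I(G)^{(s+1)}:u)$ admit a clean intersection description $\bigcap_{C\in\mathcal{A}}\mathfrak{p}_C$ over suitable minimal vertex covers, and Lemma \ref{lemreg} then establishes ${\rm reg}(I(G)^{(s+1)}:u)\le {\rm reg}(I(G))$ for all $s$ by induction, using the splitting $(I(G)^{(2)}:e)=(I(G):x_i)\cap(I(G):x_j)$ from Lemma \ref{base}. This is why Theorem \ref{1main} and its corollaries hold unconditionally for symbolic powers, while the ordinary-power Conjecture \ref{conj1} remains open: the symbolic colon is an intersection of powers of monomial primes and is insensitive to the even-walk combinatorics that make $I(G^u)$ intractable.
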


It is obvious that for any graph $G$, the inequality ${\rm reg}(I(G))\geq 2$ holds. The characterization of graphs with ${\rm reg}(I(G))=2$ is obtained by Fr${\rm \ddot{o}}$berg \cite[Theorem 1]{f}. In fact, he proved that ${\rm reg}(I(G))=2$ if and only if the complementary graph $\overline{G}$ is chordal, i.e., has no induced cycle of length at least four. Herzog, Hibi and Zheng \cite{hhz} showed that for this class of graphs, indeed we have ${\rm reg}(I(G)^s)=2s$, for every integer $s\geq 1$. In other words, Conjecture \ref{conj1} is true for any graph which has a chordal complement. Conjecture \ref{conj1} is also known to be true for other classes of graphs, including

$\bullet$ gap-free and cricket-free graphs \cite{b},

$\bullet$ cycles and unicyclic graphs \cite{abs, bht},

$\bullet$ very well-covered graphs \cite{js},

$\bullet$ bipartite graphs \cite{bn},

$\bullet$ gap-free and diamond-free graphs \cite{e1},

$\bullet$ gap-free and $C_4$-free graphs \cite{e2},

$\bullet$ Cameron-Walker graphs \cite{bbh},

$\bullet$ subclasses of bicyclic graphs \cite{cjnp, g}.\\

It is also reasonable to study the regularity of symbolic powers of edge ideals. In this direction, Minh posed the following conjecture (see \cite{ghos}).

\begin{conj} \label{conj2}
For every graph $G$ and any integer $s\geq 1$, we have$${\rm reg}(I(G)^s)={\rm reg}(I(G)^{(s)}).$$
\end{conj}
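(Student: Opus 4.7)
Conjecture \ref{conj2} is open in full generality; the natural line of attack, which presumably underlies the partial results announced in the abstract, is to route the comparison of ${\rm reg}(I(G)^{(s)})$ and ${\rm reg}(I(G)^s)$ through the short exact sequence
$$0 \longrightarrow I(G)^s \longrightarrow I(G)^{(s)} \longrightarrow I(G)^{(s)}/I(G)^s \longrightarrow 0.$$
This yields ${\rm reg}(I(G)^{(s)}) \leq \max\{{\rm reg}(I(G)^s),\ {\rm reg}(I(G)^{(s)}/I(G)^s)+1\}$ and a symmetric bound in the reverse direction, so the whole question reduces to controlling the regularity of the quotient $I(G)^{(s)}/I(G)^s$.

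My first step would be to dispose of the trivial case: when $G$ is bipartite, the Simis--Vasconcelos--Villarreal theorem gives $I(G)^{(s)} = I(G)^s$, so the quotient vanishes and Conjecture \ref{conj2} is immediate. In the non-bipartite case, each odd cycle $C$ of length $2k+1$ contributes the squarefree monomial $\prod_{v \in C} x_v$ as a generator of $I(G)^{(k+1)}$ that is missing from $I(G)^{k+1}$; the full minimal generating set of $I(G)^{(s)}/I(G)^s$ consists of products built from such odd-cycle monomials together with edge monomials. The aim is to prove
$${\rm reg}\bigl(I(G)^{(s)}/I(G)^s\bigr) \leq 2s + {\rm reg}(I(G)) - 3,$$
which, combined with Conjecture \ref{conj1} as input on the ordinary side, pins both regularities to the same value.

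To obtain this bound I would filter $I(G)^{(s)}$ by the family $J_j := I(G)^{(s)} + I(G)^{s-j}$ with $j = 0, 1, \dots, s$, use the short exact sequences $0 \to J_{j-1} \to J_j \to J_j/J_{j-1} \to 0$, and apply the first main result of the abstract to propagate an inductive bound from $s-1$ to $s$. For small $s$ the successive quotients admit a direct analysis, because only a limited number of odd-cycle configurations can contribute; this is presumably the route by which the cases $s \in \{2,3,4\}$ with $\overline{G}$ chordal are established, since Fr\"oberg's theorem together with Herzog--Hibi--Zheng already give ${\rm reg}(I(G)^s) = 2s$ on the ordinary side for that class.

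The main obstacle will be controlling the quotients $J_j/J_{j-1}$ for large $s$: these are supported on monomials indexed by multisets of overlapping odd cycles, and as $s$ grows the number of such configurations explodes, producing higher syzygies that resist a uniform combinatorial description. Without a genuinely new ingredient --- perhaps a polarization, or a simplicial complex whose Stanley--Reisner ideal models $I(G)^{(s)}/I(G)^s$ globally --- the inductive scheme cannot be closed for all $s$, which is precisely why Conjecture \ref{conj2} remains open beyond the small-$s$ cases.
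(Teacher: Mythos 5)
You are right that this statement is not a theorem of the paper at all: it is Conjecture \ref{conj2} (attributed to Minh), stated as motivation, and the paper offers no proof of it --- only partial results toward the combined Conjecture \ref{conj3} (the bounds ${\rm reg}(I(G)^{(2)})\leq {\rm reg}(I(G))+2$, ${\rm reg}(I(G)^{(3)})\leq {\rm reg}(I(G))+4$, the odd-girth results, and ${\rm reg}(I(G)^{(s)})=2s$ for $s\in\{2,3,4\}$ when $\overline{G}$ is chordal). So your assessment that the conjecture is open is the correct status, and there is no proof in the paper to compare against.

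That said, two concrete points in your sketch would not survive scrutiny. First, the proposed reduction cannot deliver the \emph{equality} ${\rm reg}(I(G)^s)={\rm reg}(I(G)^{(s)})$: the short exact sequence $0\to I^s\to I^{(s)}\to I^{(s)}/I^s\to 0$ and your filtration $J_j=I^{(s)}+I^{s-j}$ only produce one-sided, max-type inequalities, so even a proof of ${\rm reg}(I^{(s)}/I^s)\leq 2s+{\rm reg}(I(G))-3$ together with Conjecture \ref{conj1} would at best bound both regularities by the same quantity, not force them to coincide; a lower-bound mechanism for whichever side is smaller is entirely missing. Second, the claim that $I^{(s)}/I^s$ is minimally generated by products of edge monomials and odd-cycle monomials is asserted without justification and is not known in this generality (the generators of symbolic powers of edge ideals are genuinely more complicated than odd cycles for general graphs). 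For comparison, the paper's partial results do not pass through the module $I^{(s)}/I^s$ at all: they add the generators of $I^s$ to $I^{(s+1)}$ one at a time in Banerjee's even-connection order (Lemma \ref{rfirst}), prove ${\rm reg}(I(G)^{(s+1)}:u)\leq{\rm reg}(I(G))$ for every $u\in G(I(G)^s)$ (Lemmas \ref{seccol}--\ref{lemreg}), and thereby reduce everything to ${\rm reg}(I(G)^{(s+1)}+I(G)^s)$ (Theorem \ref{1main}), which is then handled via the containment $I^{(s+1)}\subseteq I^s$ under odd-girth hypotheses (Proposition \ref{cont}) or, for $s=4$ and chordal complement, via the gap-free colon computation of Lemma \ref{fouthr}.
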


In the above conjecture, $I(G)^{(s)}$, denotes the $s$-th symbolic powers of $I(G)$.

If $G$ is a bipartite graph, then we know from \cite[Theorem 5.9]{svv}, that $I(G)^{(s)}=I(G)^s$, for every integer $s\geq 1$. Therefore, Conjecture \ref{conj2} is trivially true for this class of graphs. On the other, it is known that Conjecture \ref{conj2} is also true for cycles, unicyclic graphs and Cameron-Walker graphs (see \cite{ghos, s8, s9}, respectively). Moreover, Jayanthan and Kumar \cite{jk} proved Conjecture \ref{conj2} for some classes of graphs which are obtained by the clique sum of odd cycles and bipartite graphs.

The concentration of this paper is on the following conjecture which is obtained by combination of Conjectures \ref{conj1} and \ref{conj2}.

\begin{conj} \label{conj3}
For every graph $G$ and any integer $s\geq 1$, we have$${\rm reg}(I(G)^{(s)})\leq 2s+{\rm reg}(I(G))-2.$$
\end{conj}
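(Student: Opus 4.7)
The plan is induction on $s$. The base case $s = 1$ is tautological. For the inductive step, assume that the bound in Conjecture~\ref{conj3} holds for all exponents up to $s$, and deduce it for $s+1$.

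The key reduction is the main inequality advertised in the abstract,
$$\mathrm{reg}(I(G)^{(s+1)}) \le \max\bigl\{\mathrm{reg}(I(G)) + 2s,\ \mathrm{reg}(I(G)^{(s+1)} + I(G)^s)\bigr\}.$$
To establish it, I would use the short exact sequence
$$0 \longrightarrow I(G)^{(s+1)} \longrightarrow I(G)^{(s+1)} + I(G)^s \longrightarrow \frac{I(G)^s}{I(G)^s \cap I(G)^{(s+1)}} \longrightarrow 0,$$
which gives $\mathrm{reg}(I(G)^{(s+1)}) \le \max\{\mathrm{reg}(I(G)^{(s+1)} + I(G)^s),\ \mathrm{reg}(I(G)^s/(I(G)^s \cap I(G)^{(s+1)})) + 1\}$. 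It then suffices to bound the regularity of the quotient $I(G)^s/(I(G)^s \cap I(G)^{(s+1)})$ by $2s + \mathrm{reg}(I(G)) - 2$.

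For this quotient I would adapt the colon-ideal method of Banerjee. A minimal generator of $I(G)^s$ is a product $e_{i_1}\cdots e_{i_s}$ of edges, and such a product fails to lie in $I(G)^{(s+1)}$ precisely when some minimal vertex cover of $G$ meets each $e_{i_j}$ in exactly one vertex. This combinatorial criterion should allow an explicit description of the colon ideal $(I(G)^s \cap I(G)^{(s+1)}) : (e_{i_1}\cdots e_{i_s})$ as one generated in degrees one and two from neighborhood data of the chosen edges; one can then apply standard bounds for the regularity of such ``closed-neighborhood'' ideals together with the inductive hypothesis on lower symbolic powers.

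The principal obstacle is bounding $\mathrm{reg}(I(G)^{(s+1)} + I(G)^s)$ itself. A natural attempt is the companion short exact sequence
$$0 \longrightarrow \frac{I(G)^{(s+1)}}{I(G)^{(s+1)} \cap I(G)^s} \longrightarrow S/I(G)^s \longrightarrow S/(I(G)^{(s+1)} + I(G)^s) \longrightarrow 0,$$
whose left-hand term is generated by the symbolic elements not already in the ordinary power. For small $s$ these ``extras'' come from short odd cycles of $G$ (for instance, every triangle of $G$ contributes to $I(G)^{(2)} \setminus I(G)^2$), and their contribution can be pinned down, which is consistent with the clean bounds the abstract obtains at $s \in \{2,3\}$ and under girth hypotheses. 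In general, however, the number and combinatorial type of these extras grows quickly with $s$, and finding a uniform handle on $\mathrm{reg}(I(G)^{(s+1)} + I(G)^s)$ is where I expect the real difficulty — possibly requiring a genuinely new idea — to lie.
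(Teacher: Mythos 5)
You have not proved the statement, and in fairness nobody has: Conjecture~\ref{conj3} is stated in the paper as an open conjecture, and the paper only establishes special cases of it (the reduction of Theorem~\ref{1main}, the cases $s=2,3$ in Corollary~\ref{twth}, the range $s\le k+1$ for graphs with no odd cycle of length at most $2k-1$ in Corollary~\ref{resycy}, and $s=4$ for co-chordal graphs). Your proposal reproduces the paper's reduction — bound ${\rm reg}(I(G)^{(s+1)})$ by $\max\{{\rm reg}(I(G))+2s,\ {\rm reg}(I(G)^{(s+1)}+I(G)^s)\}$ — but then, as you yourself concede in the last paragraph, the decisive step of controlling ${\rm reg}(I(G)^{(s+1)}+I(G)^s)$ uniformly in $s$ is left unresolved and ``possibly requires a genuinely new idea.'' That is precisely the gap: the induction you set up cannot close, because the inductive hypothesis concerns ${\rm reg}(I(G)^{(j)})$ for $j\le s$, while the term you must bound is ${\rm reg}(I(G)^{(s+1)}+I(G)^s)$; the paper only gets a handle on it when $I(G)^{(s+1)}\subseteq I(G)^s$ (no odd cycle of length at most $2s-3$, Proposition~\ref{cont}), in which case the sum collapses to $I(G)^s$ — and then one still needs a bound on ${\rm reg}(I(G)^s)$, which is Conjecture~\ref{conj1}, itself open in general and only available here for $s\le 2$ (Banerjee--Nevo) or under the same odd-girth hypothesis (Corollary~\ref{regord}).

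Two further remarks on the part you do sketch. First, your short exact sequence $0\to I^{(s+1)}\to I^{(s+1)}+I^s\to I^s/(I^s\cap I^{(s+1)})\to 0$ is a coarser packaging of what the paper does: to extract the term ${\rm reg}(I(G))+2s$ from it you would still have to bound the regularity of the quotient module, and the paper achieves exactly this by Banerjee's ordering of $G(I(G)^s)$, adding the generators one at a time and bounding each colon $(I(G)^{(s+1)}:u)$ for $u\in G(I(G)^s)$ by ${\rm reg}(I(G))$ (Lemmas~\ref{seccol}, \ref{col}, \ref{base}, \ref{lemreg}); that chain of lemmas — in particular the identity $(I(G)^{(s+1)}:u)=\big((I(G)^{(2)}:e_{i_1})^{(s)}:e_{i_2}\cdots e_{i_s}\big)$ and the induction giving ${\rm reg}(I(G)^{(s+1)}:u)\le{\rm reg}(I(G))$ — is nontrivial and is not supplied by your sketch. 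Second, your combinatorial criterion for when an $s$-fold product lies outside $I^{(s+1)}$ (some minimal vertex cover meeting each chosen edge in exactly one vertex) is correct and is exactly the mechanism behind Lemma~\ref{col}, so your instincts about the right colon-ideal machinery match the paper; but without the missing uniform bound on ${\rm reg}(I(G)^{(s+1)}+I(G)^s)$ the argument proves only the partial results already in the paper, not the conjecture as stated.
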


It follows from the above mentioned results that Conjecture \ref{conj3} is true for bipartite graphs, cycles, unicyclic graphs and Cameron-Walker graphs. Moreover, in \cite{s11}, we proved that for every chordal graph $G$ and every integer $s\geq 1$, the equality$${\rm reg}(I(G)^{(s)})=2s+{\rm reg}(I(G))-2$$holds. Hence, Conjecture \ref{conj3} is also true for chordal graphs.

Recently, Banerjee and Nevo \cite{bn} proved that for every graph $G$, we have ${\rm reg}(I(G)^2)\leq {\rm reg}(I(G))+2$. Thus, confirming Conjecture \ref{conj1} in the case of $s=2$. Banerjee, Beyarslan and H${\rm \grave{a}}$ \cite[Theorem 5.3]{bbh} proved the same inequality for the second symbolic power, under the extra assumption that ${\rm reg}(I(G):x)\leq {\rm reg}(IG))-1$. In Corollary \ref{twth}, we prove  that the inequality ${\rm reg}(I(G)^{(2)})\leq {\rm reg}(I(G))+2$ is true for any arbitrary graph (thus, generalizing \cite[Theorem 5.3]{bbh}). In other words, Conjecture \ref{conj3} is true for $s=2$. In the same corollary, we will see that Conjecture \ref{conj3} is true for $s=3$, too. In order to prove Corollary \ref{twth}, we first show in Theorem \ref{1main} that for every graph $G$ and any integer $s\geq 1$, we have$${\rm reg}(I(G)^{(s+1)})\leq \max\bigg\{{\rm reg}(I(G))+2s, {\rm reg}\big(I(G)^{(s+1)}+I(G)^s\big)\bigg\}.$$Hence, to verify Conjecture \ref{conj3}, we need to appropriately bound the regularity of the ideals in the form $I(G)^{(s+1)}+I(G)^s$. It is clear that for any graph $G$, $I(G)^{(2)}$ is contained in $I(G)$. Also, it is not difficult to check that $I(G)^{(3)}\subseteq I(G)^2$. Therefore, Corollary \ref{twth} easily follows from Theorem \ref{1main} and the above mentioned result of Banerjee and Nevo \cite{bn}. Indeed, we prove something more. It is shown in Proposition \ref{cont} that if $G$ has no odd cycle of length at most $2s-3$, then $I(G)^{(s+1)}\subseteq I(G)^s$. Thus, using Theorem \ref{1main}, we conclude that for such a graph, we have$${\rm reg}(I(G)^{(s+1)})\leq \max\big\{{\rm reg}(I(G))+2s, {\rm reg}(I(G)^s)\big\},$$(see Theorem \ref{2main}). As a consequence, we obtain in Corollary \ref{resycy} that if $G$ has no odd cycle of length at most $2k-1$, then for every integer $s\leq k+1$,$${\rm reg}(I(G)^{(s)})\leq 2s+{\rm reg}(I(G))-2.$$Morever, it is shown that for any graph with this property,$${\rm reg}(I(G)^s)\leq 2s+{\rm reg}(I(G))-2,$$for every $s\leq k$, Corollary \ref{regord}. This extends a recent result of Banerjee and Nevo \cite[Theorem 1.1(ii)]{bn}, which states that for every bipartite graph $G$ and every integer $s\geq 1$, we have$${\rm reg}(I(G)^s)\leq 2s+{\rm reg}(I(G))-2.$$

In Section \ref{sec4}, we study the regularity of the symbolic powers of edge ideals of graphs which have chordal complements. As we mentioned above, Herzog, Hibi and Zheng \cite{hhz} proved that for every graph with this property, ${\rm reg}(I(G)^s)=2s$. However, it is not so much known about the regularity of symbolic powers of edge ideals of these graphs. It immediately follows from Corollary \ref{twth} that for every graph $G$ with chordal complement, ${\rm reg}(I(G)^{(s)})=2s$, for $s\in \{2,3\}$. In Theorem \ref{fococh}, we show that the same equality holds for $s=4$, too.


\section{Preliminaries} \label{sec2}

In this section, we provide the definitions and basic facts which will be used in the next sections.

Let $G$ be a simple graph with vertex set $V(G)=\big\{x_1, \ldots,
x_n\big\}$ and edge set $E(G)$. We identify the vertices (resp. edges) of $G$ with variables (resp. corresponding quadratic monomials) of $S$. For a vertex $x_i$, the {\it neighbor set} of $x_i$ is $N_G(x_i)=\{x_j\mid x_ix_j\in E(G)\}$. The {\it complementary graph}
$\overline{G}$ is the graph with vertex set $V(\overline{G})=V(G)$ and its edge set $E(\overline{G})$ consists of those $2$-element subsets of $V(G)$ which do not belong to $E(G)$. For every subset $U\subset V(G)$, the graph $G\setminus U$ has vertex set $V(G\setminus U)=V(G)\setminus U$ and edge set $E(G\setminus U)=\{e\in E(G)\mid e\cap U=\emptyset\}$. A subgraph $H$ of $G$ is called {\it induced} provided that two vertices of $H$ are adjacent if and only if they are adjacent in $G$. The graph $G$ is {\it chordal} if it has no induced cycle of length at least four. Two disjoint edges $e, e'\in E(G)$ are said to be a {\it gap}, if they form an induced subgraph of $G$. The graph $G$ is {\it gap-free} if it has no gap. The complete graph on $n$ vertices is denote by $K_n$. The graph $K_3$ is also called a {\it triangle}. A subset $A\subseteq V(G)$ is an {\it independent set} in $G$ if no pair of vertices in $A$ are adjacent. A subset $C$ of $V(G)$ is a {\it vertex cover} of $G$ if every edge of $G$ is incident to at least one vertex of $C$. A vertex cover $C$ is a {\it minimal vertex cover} if no proper subset of $C$ is a vertex cover of $G$. The set of minimal vertex covers of $G$ will be denoted by $\mathcal{C}(G)$.

Let $G$ be a graph with vertex set $V(G)=\big\{x_1, \ldots, x_n\big\}$ and edge set $E(G)$. The {\it edge ideal} of $G$, denoted by $I(G)$, is the monomial ideal of $S$ which is generated by quadratic squarefree monomials corresponding to edges of $G$, i.e.,$$I(G)=\big(x_ix_j: x_ix_j\in E(G)\big).$$Computing and finding bounds for the regularity of edge ideals and their powers have been studied by a number of researchers (see for example \cite{ab},  \cite{abs}, \cite{b},  \cite{bbh}, \cite{bht}, \cite{dhs}, \cite{ha}, \cite{jns}, \cite{k}, \cite{msy}, \cite{sy} and \cite{wo}). We refer the reader to \cite{bbh1} for a survey on this topic.

For every monomial $u\in S$ and for every variable $x_i$, the {\it degree of $u$ with respect to $x_i$}, denoted by ${\rm deg}_{x_i}(u)$ is the largest power of $x_i$ which divides $u$. For a monomial ideal $I$, we denote by $G(I)$, the set of minimal monomial generators of $I$. For every subset $A$ of $\big\{x_1, \ldots, x_n\big\}$, $\mathfrak{p}_A$ denotes the monomial prime ideal which is generated by the variables belonging to $A$. It is well-known that for every graph $G$ with edge ideal $I(G)$,$$I(G)=\bigcap_{C\in \mathcal{C}(G)}\mathfrak{p}_C.$$

We close this section by recalling the definition of symbolic powers.

Let $I$ be an ideal of $S$ and let ${\rm Min}(I)$ denote the set of minimal primes of $I$. For every integer $s\geq 1$, the $s$-th {\it symbolic power} of $I$,
denoted by $I^{(s)}$, is defined to be$$I^{(s)}=\bigcap_{\frak{p}\in {\rm Min}(I)} {\rm Ker}(S\rightarrow (S/I^s)_{\frak{p}}).$$Assume that $I$ is a squarefree monomial ideal in $S$ and suppose $I$ has the irredundant
primary decomposition $$I=\frak{p}_1\cap\ldots\cap\frak{p}_r,$$ where every
$\frak{p}_i$ is an ideal generated by a subset of the variables of
$S$. It follows from \cite[Proposition 1.4.4]{hh} that for every integer $s\geq 1$, $$I^{(s)}=\frak{p}_1^s\cap\ldots\cap
\frak{p}_r^s.$$In particular, for every graph $G$,$$I(G)^{(s)}=\bigcap_{C\in \mathcal{C}(G)}\mathfrak{p}_C^s.$$


\section{Upper bounds for the regularity of symbolic powers} \label{sec3}

The aim of this section is to prove Conjecture \ref{conj3} in the following cases: (i) For $s=2,3$. (ii) For any integer $s\leq k+1$, where $k\geq 1$ is an integer with the property that $G$ has no odd cycle of length at most $2k-1$. To achieve this goal, in Theorem \ref{1main}, we determine an upper bound for the regularity of symbolic powers of edge ideals. The following lemma is the first step in the proof of Theorem \ref{1main} and its proof is based on an appropriate ordering of minimal monomial generators of ordinary powers of edge ideals, which has been provided by Banerjee \cite{b}.

\begin{lem} \label{rfirst}
Assume that $G$ is a graph and $s\geq 1$ is a positive integer. Let $G(I(G)^s)=\{u_1, \ldots, u_m\}$ denote the set of minimal monomial generators of $I(G)^s$. Then$${\rm reg}(I(G)^{(s+1)})\leq \max\bigg\{{\rm reg}\big(I(G)^{(s+1)}:u_i\big)+2s, 1\leq i\leq m, {\rm reg}\big(I(G)^{(s+1)}+I(G)^s\big)\bigg\}.$$
\end{lem}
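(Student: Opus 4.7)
The plan is to iterate the standard short exact sequence for colon ideals, peeling off the minimal generators $u_1,\ldots,u_m$ of $I(G)^s$ one at a time, using an ordering on $G(I(G)^s)$ due to Banerjee \cite{b}.

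Fix Banerjee's ordering $u_1,\ldots,u_m$ and set $J_0:=I(G)^{(s+1)}$ and $J_i:=J_{i-1}+(u_i)$ for $1\le i\le m$, so that $J_m=I(G)^{(s+1)}+I(G)^s$. For each $i$, the short exact sequence
$$0\longrightarrow \bigl(S/(J_{i-1}:u_i)\bigr)(-2s)\xrightarrow{\cdot u_i}S/J_{i-1}\longrightarrow S/J_i\longrightarrow 0$$
(valid since $\deg u_i=2s$) yields the inequality $\mathrm{reg}(J_{i-1})\le\max\{\mathrm{reg}(J_{i-1}:u_i)+2s,\mathrm{reg}(J_i)\}$. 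Iterating over $i=1,\ldots,m$ produces
$$\mathrm{reg}\bigl(I(G)^{(s+1)}\bigr)\le\max\Bigl\{\mathrm{reg}(J_{i-1}:u_i)+2s:1\le i\le m\Bigr\}\cup\bigl\{\mathrm{reg}\bigl(I(G)^{(s+1)}+I(G)^s\bigr)\bigr\}.$$

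To complete the proof, it remains to show $\mathrm{reg}(J_{i-1}:u_i)\le\mathrm{reg}(I(G)^{(s+1)}:u_i)$ for each $i$. Writing
$$J_{i-1}:u_i=\bigl(I(G)^{(s+1)}:u_i\bigr)+\bigl((u_1,\ldots,u_{i-1}):u_i\bigr),$$
the heart of the argument is that under Banerjee's ordering, the extra colon $(u_1,\ldots,u_{i-1}):u_i$ is a squarefree monomial ideal whose degree-$2$ generators $vw$ satisfy $vw\cdot u_i\in I(G)^{s+1}\subseteq I(G)^{(s+1)}$ and therefore already lie in $I(G)^{(s+1)}:u_i$; those quadratic generators are absorbed. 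Only the degree-$1$ generators---single variables---contribute new elements to $J_{i-1}:u_i$ beyond $I(G)^{(s+1)}:u_i$, and since adjoining a variable to a monomial ideal never raises the Castelnuovo--Mumford regularity, the desired inequality $\mathrm{reg}(J_{i-1}:u_i)\le\mathrm{reg}(I(G)^{(s+1)}:u_i)$ follows.

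The main obstacle is the careful invocation of Banerjee's ordering lemma from \cite{b}, which was originally established for ordinary powers: one needs the precise structural statement on $(u_1,\ldots,u_{i-1}):u_i$ and must pass from the ordinary-power containment to the symbolic one via $I(G)^{s+1}\subseteq I(G)^{(s+1)}$. All of the combinatorial content of the proof is concentrated in this translation.
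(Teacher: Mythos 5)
Your proposal is correct and follows essentially the same route as the paper: the same filtration $J_i=(I(G)^{(s+1)},u_1,\ldots,u_i)$ with the short exact sequences giving ${\rm reg}(J_{i-1})\le\max\{{\rm reg}(J_{i-1}:u_i)+2s,{\rm reg}(J_i)\}$, combined with Banerjee's ordering theorem to write $(J_{i-1}:u_i)=(I(G)^{(s+1)}:u_i)+(\text{variables})$ via the containment $I(G)^{s+1}\subseteq I(G)^{(s+1)}$, and the fact (Banerjee's Lemma 2.10) that adjoining variables to a monomial ideal does not increase regularity. The only cosmetic difference is that you phrase Banerjee's structural result in terms of degree-one and degree-two generators of $(u_1,\ldots,u_{i-1}):u_i$ rather than the pairwise colon conditions the paper quotes, which is the same content.
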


\begin{proof}
Using \cite[Theorem 4.12]{b}, we may assume that for every pair of integers $1\leq j< i\leq m$, one of the following conditions hold.
\begin{itemize}
\item [(i)] $(u_j:u_i) \subseteq (I(G)^{s+1}:u_i)\subseteq (I(G)^{(s+1)}:u_i)$; or
\item [(ii)] there exists an integer $k\leq i-1$ such that $(u_k:u_i)$ is generated by a subset of variables, and $(u_j:u_i)\subseteq (u_k:u_i)$.
\end{itemize}
Thus, for every integer $i\geq 2$,
\begin{align*}
\big((I(G)^{(s+1)}, u_1, \ldots, u_{i-1}):u_i\big)=(I(G)^{(s+1)}:u_i)+({\rm some \ variables}).
\end{align*}
Hence, we conclude from \cite[Lemma 2.10]{b} that
\[
\begin{array}{rl}
{\rm reg}\big((I(G)^{(s+1)}, u_1, \ldots, u_{i-1}):u_i\big) \leq {\rm reg}(I(G)^{(s+1)}:u_i).
\end{array} \tag{1} \label{1}
\]

For every integer $i$ with $0\leq i\leq m$, set $I_i:=(I(G)^{(s+1)}, u_1, \ldots, u_i)$. In particular, $I_0=I(G)^{(s+1)}$ and $I_m=I(G)^{(s+1)}+I(G)^s$. Consider the exact sequence
$$0\rightarrow S/(I_{i-1}:u_i)(-2s)\rightarrow S/I_{i-1}\rightarrow S/I_i\rightarrow 0,$$
for every $1\leq i\leq m$. It follows that$${\rm reg}(I_{i-1})\leq \max \big\{{\rm reg}(I_{i-1}:u_i)+2s, {\rm reg}(I_i)\big\}.$$Therefore,
\begin{align*}
& {\rm reg}(I(G)^{(s+1)})={\rm reg}(I_0)\leq \max\big\{{\rm reg}(I_{i-1}:u_i)+2s, 1\leq i\leq m, {\rm reg}(I_m)\big\}\\ & =\max\big\{{\rm reg}(I_{i-1}:u_i)+2s, 1\leq i\leq m, {\rm reg}(I(G)^{(s+1)}+I(G)^s)\big\}.
\end{align*}
The assertion now follows from the inequality (\ref{1}).
\end{proof}

In order to use Lemma \ref{rfirst}, to bound the regularity of the $(s+1)$-th symbolic powers of an edge ideal, we first need to obtain information about the ideals of the form $(I(G)^{(s+1)}:u)$, where $u$ is an element of the set of minimal monomial generators of $I(G)^s$. The following lemma determines a generating set  for theses ideals, in the special case of $s=1$.

\begin{lem} \label{seccol}
Let $G$ be a graph and let $e=x_ix_j$ be an edge of $G$. Then
\begin{align*}
& \big(I(G)^{(2)}:e\big)=I(G)+\big(x_px_q: x_p\in N_G(x_i), x_q\in N_G(x_j), x_p\neq x_q\big)\\ & +\big(x_t: x_t\in N_G(x_i)\cap N_G(x_j)\big).
\end{align*}
\end{lem}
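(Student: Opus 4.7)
The plan is to exploit the primary decomposition $I(G)^{(2)}=\bigcap_{C\in\mathcal{C}(G)}\mathfrak{p}_C^{2}$ and compute the colon termwise. Since colons distribute over intersections, I would first write
$$\bigl(I(G)^{(2)}:x_ix_j\bigr)=\bigcap_{C\in\mathcal{C}(G)}\bigl(\mathfrak{p}_C^{2}:x_ix_j\bigr).$$
A direct case analysis then shows $(\mathfrak{p}_C^{2}:x_ix_j)=S$ whenever $\{x_i,x_j\}\subseteq C$, and $(\mathfrak{p}_C^{2}:x_ix_j)=\mathfrak{p}_C$ whenever exactly one of $x_i,x_j$ lies in $C$; the case in which neither endpoint belongs to $C$ cannot occur, since $C$ must cover the edge $x_ix_j$. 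Denote by $\mathcal{A}$ the family of minimal vertex covers containing exactly one of $x_i,x_j$. The task reduces to proving
$$\bigcap_{C\in\mathcal{A}}\mathfrak{p}_C=J,$$
where $J$ is the ideal on the right-hand side of the asserted equality.

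For the inclusion $J\subseteq\bigcap_{C\in\mathcal{A}}\mathfrak{p}_C$, I would fix $C\in\mathcal{A}$ and, without loss of generality, assume $x_i\in C$ and $x_j\notin C$. Then every neighbor of $x_j$ must lie in $C$ to cover its incident edges, which shows that each singleton generator $x_t$ with $x_t\in N_G(x_i)\cap N_G(x_j)$ and each quadratic generator $x_px_q$ with $x_q\in N_G(x_j)$ lies in $\mathfrak{p}_C$; and $I(G)\subseteq\mathfrak{p}_C$ holds trivially for any vertex cover.

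For the reverse inclusion, since both ideals are squarefree monomial ideals, it suffices to take a squarefree monomial $m$ in the intersection, assume for contradiction that $m\notin J$, and derive a contradiction. Writing $A\subseteq V(G)$ for the support of $m$, the condition $m\notin J$ forces the following: (i)~$A$ is independent in $G$, as no edge of $G$ divides $m$; (ii)~$A\cap N_G(x_i)\cap N_G(x_j)=\emptyset$, as no singleton generator of $J$ divides $m$; and (iii)~for every $x_p\in A\cap N_G(x_i)$ and $x_q\in A\cap N_G(x_j)$ one has $x_p=x_q$, as no quadratic generator of $J$ divides $m$. Items (ii) and (iii) together exclude the possibility that both $A\cap N_G(x_i)$ and $A\cap N_G(x_j)$ are nonempty, so after relabeling I may assume $A\cap N_G(x_i)=\emptyset$.

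It then remains to exhibit a cover $C\in\mathcal{A}$ disjoint from $A$. Since $A$ is independent and $A\cap N_G(x_i)=\emptyset$, the set $A\cup\{x_i\}$ is also independent; extending it to a maximal independent set $B$ and setting $C=V(G)\setminus B$ yields a minimal vertex cover with $x_i\notin C$, which forces $x_j\in C$ because the presence of $x_i$ in $B$ excludes $x_j$ from $B$. Hence $C\in\mathcal{A}$, while $A\subseteq B$ forces $A\cap C=\emptyset$ and therefore $m\notin\mathfrak{p}_C$, contradicting $m\in\bigcap_{C\in\mathcal{A}}\mathfrak{p}_C$. The most delicate point is the combined use of (ii) and (iii) to conclude that at least one of the two neighborhoods is entirely disjoint from $A$; this is precisely the step that enables the independent-extension argument that finishes the proof.
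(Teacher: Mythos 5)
Your proof is correct, but it is organized differently from the paper's argument for this lemma. You first push the colon through the decomposition $I(G)^{(2)}=\bigcap_{C\in\mathcal{C}(G)}\mathfrak{p}_C^{2}$, compute $(\mathfrak{p}_C^{2}:x_ix_j)$ termwise, and thereby reduce the statement to the radical identity $\bigcap_{C\in\mathcal{A}}\mathfrak{p}_C=J$, which you then verify on squarefree monomials. The paper instead proves the lemma directly at the level of monomials: the inclusion of the right-hand side is shown by exhibiting $x_px_q\,e=(x_px_i)(x_qx_j)\in I(G)^2$ and $x_t e\in I(T)^{(2)}$ for the triangle $T$ on $\{x_i,x_j,x_t\}$, and the reverse inclusion by a degree count in $\mathfrak{p}_C^{2}$ against a minimal cover $C$ obtained from an independent set containing $\mathrm{supp}(u)\cup\{x_i\}$. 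The combinatorial core is shared -- both arguments extend the support together with an endpoint of $e$ to a (maximal) independent set and pass to the complementary minimal vertex cover -- but your reduction replaces the second-power degree counting and the triangle argument by the cleaner description $(I(G)^{(2)}:e)=\bigcap_{C\in\mathcal{A}}\mathfrak{p}_C$, which is in effect the identity $(I(G)^{(2)}:e)=(I(G):x_i)\cap(I(G):x_j)$ that the paper only establishes later, in the proof of Lemma \ref{base} (and in greater generality in Lemma \ref{col}). What your route buys is uniformity and symmetry: both inclusions become statements about which minimal covers meet which supports, and the delicate step you flag (that conditions (ii) and (iii) force one of $A\cap N_G(x_i)$, $A\cap N_G(x_j)$ to be empty) is handled correctly. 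What the paper's route buys is self-containedness at this point in the text: it needs no primary decomposition of the colon, only the containments $I(G)^2\subseteq I(G)^{(2)}$ and $I(T)^{(2)}\subseteq I(G)^{(2)}$.
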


\begin{proof}
We first show that the right hand side is contained in the left hand side. Obviously, $I(G)\subseteq (I(G)^{(2)}:e)$. Let $x_p$ and $x_q$ be the vertices of $G$ with $x_p\in N_G(x_i)$ and $x_q\in N_G(x_j)$. Then$$x_px_qe=(x_px_i)(x_qx_j)\in I(G)^2\subseteq I(G)^{(2)}.$$Hence, $x_px_q\in (I(G)^{(2)}:e)$. Next, suppose $x_t$ is a vertex with $x_t\in N_G(x_i)\cap N_G(x_j)$. Assume that $T$ is the induced subgraph of $G$ on $\{x_i, x_j, x_t\}$. In particular, $T$ is a triangle. Then$$x_te=x_tx_ix_j\in I(T)^{(2)}\subseteq I(G)^{(2)}.$$Therefore, $x_t\in (I(G)^{(2)}:e)$.

We now prove the reverse inclusion. Let $u$ be a monomial in $(I(G)^{(2)}:e)$ and suppose$$u\notin I(G)+\big(x_t: x_t\in N_G(x_i)\cap N_G(x_j)\big).$$We show that$$u \in \big(x_px_q: x_p\in N_G(x_i), x_q\in N_G(x_j), x_p\neq x_q\big).$$

Let $A=\{x_{k_1}, \ldots, x_{k_{\ell}}\}$ be the set of variables dividing $u$. Since $u\notin I(G)$, it follows that $A$ is an independent subset of vertices of $G$. Assume that neither of $x_{k_1}, \ldots, x_{k_{\ell}}$ is adjacent to $x_i$. This implies that $A\cup\{x_i\}$ is an independent subset of vertices of $G$. In other words, $V(G)\setminus(A\cup\{x_i\})$ is a vertex cover of $G$. Therefore, there is a minimal vertex cover $C$ of $G$ which is contained in $V(G)\setminus(A\cup\{x_i\})$. Since $u \in(I(G)^{(2)}:e)$, it follows that $ue\in I(G)^{(2)}\subseteq \mathfrak{p}_C^2$. This is a contradiction, as $x_j$ is the only variable in $\mathfrak{p}_C$ which divides $ue$ and ${\rm deg}_{x_j}(ue)=1$. Hence, there is a variable, say $x_{k_a}$, dividing $u$, which is adjacent to $x_i$ in $G$. Similarly, there is a variable $x_{k_b}$ dividing $u$, which is adjacent to $x_j$. As$$u\notin\big(x_t: x_t\in N_G(x_i)\cap N_G(x_j)\big),$$we conclude that $x_{k_a}\neq x_{k_b}$. Thus,$$u \in \big(x_px_q: x_p\in N_G(x_i), x_q\in N_G(x_j), x_p\neq x_q\big),$$and we are done.
\end{proof}

In the following lemma, we study the ideals of the form $(I(G)^{(s+1)}:u)$, for some $u\in G(I(G)^s)$, and for any arbitrary integer $s\geq 1$.

\begin{lem} \label{col}
Assume that $G$ is a graph with edge set $E(G)=\{e_1, \ldots, e_r\}$, and let $s\geq 1$ be a positive integer. Then for any $s$-fold product $u=e_{i_1}\ldots e_{i_s}$, we have$$\big(I(G)^{(s+1)}:u\big)=\bigg(\big(I(G)^{(2)}:e_{i_1}\big)^{(s)}: e_{i_2}\ldots e_{i_s}\bigg).$$
\end{lem}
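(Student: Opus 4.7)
My plan is to reduce both sides of the claimed identity to the same intersection of monomial primes by using the primary decomposition $I(G)^{(s+1)} = \bigcap_{C \in \mathcal{C}(G)} \mathfrak{p}_C^{s+1}$ recalled in Section \ref{sec2}, together with the elementary colon formula for a power of a monomial prime: if $\mathfrak{p}_C$ is generated by the variables in $C$ and $m$ is any monomial, then $(\mathfrak{p}_C^t : m) = \mathfrak{p}_C^{\max(0,\, t - d_C(m))}$, where $d_C(m) := \sum_{x \in C} \deg_x(m)$.

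First I would compute the left-hand side. Since colons commute with intersections,
$$\bigl(I(G)^{(s+1)} : u\bigr) \;=\; \bigcap_{C \in \mathcal{C}(G)} \mathfrak{p}_C^{\max(0,\, s+1 - d_C(u))}.$$
Because each $e_{i_k}$ is an edge of $G$ and $C$ is a vertex cover, $d_C(e_{i_k}) \in \{1,2\}$, so $d_C(u) \geq s$, and the exponent above is either $0$ or $1$. Thus $(\mathfrak{p}_C^{s+1}:u) = \mathfrak{p}_C$ precisely when $d_C(e_{i_k}) = 1$ for every $k$ (equivalently, no $e_{i_k}$ has both endpoints in $C$), and equals $S$ otherwise. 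Writing $\mathcal{C}_0$ for the set of such minimal vertex covers, the left-hand side equals $\bigcap_{C \in \mathcal{C}_0} \mathfrak{p}_C$.

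Next I would handle the right-hand side. The very same colon computation, applied with $s=1$, gives
$$\bigl(I(G)^{(2)} : e_{i_1}\bigr) \;=\; \bigcap_{C \in \mathcal{C}(G),\, d_C(e_{i_1})=1} \mathfrak{p}_C.$$
This is a squarefree monomial ideal written as an intersection of monomial primes corresponding to certain minimal vertex covers; since distinct minimal vertex covers form an antichain with respect to inclusion, the $\mathfrak{p}_C$ appearing are pairwise incomparable, so this is already an irredundant primary decomposition into prime ideals. Applying \cite[Proposition 1.4.4]{hh} yields
$$\bigl(I(G)^{(2)} : e_{i_1}\bigr)^{(s)} \;=\; \bigcap_{C \in \mathcal{C}(G),\, d_C(e_{i_1})=1} \mathfrak{p}_C^{s}.$$
A further application of the colon formula expresses the right-hand side of the lemma as $\bigcap_{C : d_C(e_{i_1})=1} \mathfrak{p}_C^{\max(0,\, s - \sum_{k \geq 2} d_C(e_{i_k}))}$, and the same bookkeeping as in the first computation shows the exponent equals $1$ exactly when $d_C(e_{i_k}) = 1$ for all $k \geq 2$. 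Combined with the standing hypothesis $d_C(e_{i_1}) = 1$, this picks out exactly the covers $C \in \mathcal{C}_0$, giving RHS $= \bigcap_{C \in \mathcal{C}_0} \mathfrak{p}_C$, which matches the LHS.

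The main subtle point is in the intermediate step: in order to legitimately invoke \cite[Proposition 1.4.4]{hh} to compute $(I(G)^{(2)} : e_{i_1})^{(s)}$, I must know that the intersection $\bigcap_{C : d_C(e_{i_1})=1} \mathfrak{p}_C$ really is an irredundant primary decomposition. This is where the pairwise incomparability of the $\mathfrak{p}_C$ for distinct minimal vertex covers becomes essential. Once this is in place, the proof reduces to tracking, for each minimal vertex cover $C$ and each edge $e_{i_k}$, whether $d_C(e_{i_k}) = 1$ or $2$, which is entirely mechanical.
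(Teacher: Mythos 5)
Your proof is correct and follows essentially the same route as the paper: both sides are reduced, via the decomposition $I(G)^{(s+1)}=\bigcap_{C\in\mathcal{C}(G)}\mathfrak{p}_C^{s+1}$ and the colon computation $(\mathfrak{p}_C^{t}:m)=\mathfrak{p}_C^{\max(0,\,t-d_C(m))}$, to the intersection of $\mathfrak{p}_C$ over those minimal vertex covers meeting every $e_{i_j}$ in exactly one vertex. The only (welcome) difference is that you make explicit the irredundancy of $\bigcap_{C:\,d_C(e_{i_1})=1}\mathfrak{p}_C$ needed to invoke \cite[Proposition 1.4.4]{hh}, a point the paper leaves implicit.
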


\begin{proof}
Let $C$ be a minimal vertex cover of $G$. Then for every integer $j$ with $1\leq j\leq s$, we have $|C\cap e_{i_j}|\geq 1$. If $|C\cap e_{i_j}|=1$, for all $j$, then$$\sum_{x_k\in C}{\rm deg}_{x_k}(u)=\sum_{x_k\in C}{\rm deg}_{x_k}(e_{i_1}\ldots e_{i_s})=s.$$Therefore, $(\mathfrak{p}_C^{s+1}:u)=\mathfrak{p}_C$. On the other hand, if $|C\cap e_{i_j}|=2$, for some integer $j$ with $1\leq j\leq s$, then$$\sum_{x_k\in C}{\rm deg}_{x_k}(u)\geq s+1.$$Hence, $u\in \mathfrak{p}_C^{s+1}$. In other words, $(\mathfrak{p}_C^{s+1}:u)=S$.

Let $\mathcal{A}$ denote the set of all minimal vertex covers $C$ of $G$ such that $|C\cap e_{i_j}|=1$, for any integer $j$ with $1\leq j\leq s$. It follows from the above argument that$$\big(I(G)^{(s+1)}:u\big)=\bigg(\bigcap_{C\in \mathcal{C}(G)}\mathfrak{p}_C^{s+1}\bigg): u=\bigcap_{C\in \mathcal{C}(G)}(\mathfrak{p}_C^{s+1}: u)=\bigcap_{C\in \mathcal{A}}\mathfrak{p}_C.$$

Let $\mathcal{A}'$ be the set of all minimal vertex covers $C$ of $G$ with $|C\cap e_{i_1}|=1$ and let $\mathcal{A}''$ be the set of all minimal vertex covers $C$ of $G$ such that $|C\cap e_{i_j}|=1$, for any integer $j$ with $2\leq j\leq s$. Obviously, $\mathcal{A}=\mathcal{A}'\cap\mathcal{A}''$. Using a similar argument as above, we have$$\big(I(G)^{(2)}:e_{i_1}\big)=\bigcap_{C\in \mathcal{A}'}\mathfrak{p}_C.$$Consequently,
\begin{align*}
& \bigg(\big(I(G)^{(2)}:e_{i_1}\big)^{(s)}: e_{i_2}\ldots e_{i_s}\bigg)=\bigcap_{C\in \mathcal{A}'}\big(\mathfrak{p}_C^s: e_{i_2}\ldots e_{i_s}\big)\\ & =\bigcap_{C\in \mathcal{A}'\cap \mathcal{A}''}\mathfrak{p}_C=\bigcap_{C\in \mathcal{A}}\mathfrak{p}_C=\big(I(G)^{(s+1)}:u\big).
\end{align*}
\end{proof}

Assume that $G$ is a graph and $e=x_ix_j$ is an edge of $G$. Let $G'$ be the graph which is obtained from $G$ by adding the edges of the form $x_px_q$ with $x_p\in N_G(x_i)$, $x_q\in N_G(x_j)$ and $x_p\neq x_q$. It is recently shown by Banerjee and Nevo \cite[Theorem 3.1]{bn} that ${\rm reg}(I(G'))\leq {\rm reg}(I(G))$. This following lemma can be deduced from this result. However, we provide an alternative proof.

\begin{lem} \label{base}
Let $G$ be a graph and $e$ be an edge of $G$. Then$${\rm reg}\big(I(G)^{(2)}:e\big)\leq {\rm reg}(I(G)).$$
\end{lem}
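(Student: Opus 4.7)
The plan is to combine the explicit description of $I(G)^{(2)}:e$ supplied by Lemma \ref{seccol} with a collapse-the-variables argument, and then invoke the Banerjee--Nevo inequality on induced neighborhoods cited just before the statement. Write $e = x_ix_j$, $J = I(G)^{(2)}:e$, and let $W = N_G(x_i)\cap N_G(x_j)$ be the common neighborhood. By Lemma \ref{seccol},
\[
J \;=\; I(G) \,+\, L \,+\, (W), \qquad L \;=\; \bigl(x_px_q : x_p\in N_G(x_i),\, x_q\in N_G(x_j),\, x_p\neq x_q\bigr).
\]

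The first step is to note that the variables in $W$ already belong to $J$, so ``quotienting out $W$'' costs nothing in terms of regularity. Let $S' = \mathbb{K}[x_k : x_k\notin W]$ and let $J'\subseteq S'$ be the image of $J$ under $S \twoheadrightarrow S/(W)\cong S'$. A standard Koszul/Künneth computation (the minimal resolution of $S/J$ over $S$ is obtained from that of $S'/J'$ over $S'$ by tensoring with the Koszul complex on $W$, which only contributes Betti numbers on the diagonal $j=i$) yields ${\rm reg}_S(J)={\rm reg}_{S'}(J')$.

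The second step is to identify $J'$ as an edge ideal. A generator $x_px_q\in L$ survives mod $(W)$ iff $x_p\notin N_G(x_j)$ and $x_q\notin N_G(x_i)$; meanwhile the image of $I(G)$ is $I(G\setminus W)$. Hence $J' = I(H)$, where $H$ is the graph on $V(G)\setminus W$ obtained from $G\setminus W$ by adding every $x_px_q$ with $x_p\in N_G(x_i)\setminus N_G(x_j)$ and $x_q\in N_G(x_j)\setminus N_G(x_i)$. Because $W$ contains all common neighbors of $x_i$ and $x_j$, these two sets are exactly $N_{G\setminus W}(x_i)$ and $N_{G\setminus W}(x_j)$, so $H$ is precisely the Banerjee--Nevo expansion of $G\setminus W$ along the edge $e$. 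Therefore, applying \cite[Theorem 3.1]{bn} to $G\setminus W$ and using that regularity of edge ideals is monotone under induced subgraphs (by Hochster's formula, or by repeatedly deleting a variable not appearing in the ideal), we chain
\[
{\rm reg}(J) \;=\; {\rm reg}(I(H)) \;\leq\; {\rm reg}(I(G\setminus W)) \;\leq\; {\rm reg}(I(G)).
\]

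The delicate point is the identification of $J'$ with the edge ideal of the Banerjee--Nevo expansion: one must verify that after killing $W$ every surviving quadratic generator in $L$ really does correspond to an edge between $N_{G\setminus W}(x_i)$ and $N_{G\setminus W}(x_j)$, and that no stray variable generators remain. Everything else is formal. Since the paper advertises an \emph{alternative} proof that does not quote \cite[Theorem 3.1]{bn}, a self-contained route would be to induct on $|L|$ by adding the extra edges one at a time via the short exact sequence $0\to S/(J_{k-1}:L_k)(-2)\to S/J_{k-1}\to S/J_k\to 0$ and controlling the colon ideals in each step; that route is conceptually similar but technically heavier, which is why I would first pursue the Banerjee--Nevo reduction above.
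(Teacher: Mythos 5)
Your argument is correct, but it is not the route the paper takes: it is precisely the deduction from \cite[Theorem 3.1]{bn} that the paper mentions just before the lemma and then deliberately avoids, announcing ``an alternative proof.'' Your chain is sound: by Lemma \ref{seccol}, $I(G)^{(2)}:e = I(G)+L+(W)$ with $W=N_G(x_i)\cap N_G(x_j)$; killing the variable generators $W$ does not change the regularity (Koszul tensor argument, or \cite[Lemma 2.10]{b}); the surviving quadrics are exactly $I(G\setminus W)$ together with $x_px_q$ for $x_p\in N_G(x_i)\setminus N_G(x_j)=N_{G\setminus W}(x_i)$ and $x_q\in N_G(x_j)\setminus N_G(x_i)=N_{G\setminus W}(x_j)$, so the quotient ideal is the edge ideal of the Banerjee--Nevo expansion of $G\setminus W$ along $e$ (note $x_i,x_j\notin W$, so $e$ survives and the ideal is nonzero); then \cite[Theorem 3.1]{bn} plus monotonicity of regularity under induced subgraphs \cite[Lemma 3.1]{ha} finishes. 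The paper instead never uses Lemma \ref{seccol} here: it writes $I(G)^{(2)}:e$ via the vertex-cover decomposition as the intersection $(I(G):x_j)\cap(I(G):x_i)$, bounds ${\rm reg}(I(G):x_i)$ and ${\rm reg}(I(G):x_j)$ by ${\rm reg}(I(G))$ using \cite[Lemma 4.2]{s3}, identifies $(I(G):x_i)+(I(G):x_j)$ with $I(H)$ plus variables for $H=G\setminus\bigl(N_G(x_i)\cup N_G(x_j)\bigr)$ (whose disjoint union with $e$ is induced in $G$, giving ${\rm reg}(I(H))\leq{\rm reg}(I(G))-1$), and concludes from the Mayer--Vietoris exact sequence $0\to S/(I(G)^{(2)}:e)\to S/(I(G):x_i)\oplus S/(I(G):x_j)\to S/\bigl((I(G):x_i)+(I(G):x_j)\bigr)\to 0$. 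What each buys: your reduction is shorter but leans on \cite[Theorem 3.1]{bn} as a black box, whereas the paper's intersection argument is self-contained modulo older results and independent of that theorem; also the self-contained fallback you sketch (adding the edges of $L$ one at a time) is not what the paper does, so if you want an argument matching the paper's spirit, the colon-ideal intersection plus Mayer--Vietoris is the mechanism to adopt.
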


\begin{proof}
Suppose $x_i$ and $x_j$ are the endpoints of $e$. As $x_i$ and $x_j$ are adjacent in $G$, every minimal vertex cover of $G$ contains at least one of them. Let $\mathcal{A}_1$ denote the set of minimal vertex covers of $G$ which contain exactly one of $x_i$ and $x_j$. Also, assume $\mathcal{A}_2$ (resp. $\mathcal{A}_3$) is the set of minimal vertex covers $C\in \mathcal{A}_1$ with $x_i\in C$ (resp. $x_j\in C$). In particular, we have $\mathcal{A}_1=\mathcal{A}_2\cup \mathcal{A}_3$. Then
$$(I(G)^{(2)}:e)=\bigcap_{C\in \mathcal{A}_1}\mathfrak{p}_C=\big(\bigcap_{C\in\mathcal{A}_2}\mathfrak{p}_C\big)\cap\big(\bigcap_{C\in\mathcal{A}_3}\mathfrak{p}_C\big)=(I(G):x_j)\cap(I(G):x_i).$$

We know from from \cite[Lemma 4.2]{s3} that$${\rm reg}(I(G):x_i)\leq {\rm reg}(I(G)).$$Similarly,$${\rm reg}(I(G):x_j)\leq {\rm reg}(I(G)).$$

Let $H$ be the graph which is obtained from $G$ by deleting the vertices in $N_G(x_i)\cup N_G(x_i)$. Then$$(I(G):x_i)+(I(G):x_j)=I(H)+({\rm some \ variables}.)$$ As the disjoint union of $H$ and $e$ is an induced subgraph of $G$, it follows from \cite[Lemma 3.1]{ha} and \cite[Lemma 3.2]{ht} that ${\rm reg}(I(H))+\leq {\rm reg}(I(G))$. Therefore,$${\rm reg}\big((I(G):x_i)+(I(G):x_j)\big)={\rm reg}(I(H))\leq {\rm reg}(I(G))-1.$$

Consider the following exact sequence.
$$0\rightarrow S/(I(G)^{(2)}:e)\rightarrow S/(I(G):x_i)\oplus S/(I(G):x_j)\rightarrow S/\big((I(G):x_i)+(I(G):x_j)\big)\rightarrow 0.$$Using \cite[Corollary 18.7]{p'} and the above argument, we deduce that
\begin{align*}
& {\rm reg}\big(I(G)^{(2)}:e\big)\leq\max\{{\rm reg}(I(G):x_i), {\rm reg}(I(G):x_j), {\rm reg}\big((I(G):x_i)+(I(G):x_j)\big)\}\\ & \leq{\rm reg}(I(G)).
\end{align*}
\end{proof}

The following lemma extends Lemma \ref{base}.

\begin{lem} \label{lemreg}
Assume that $G$ is a graph with edge set $E(G)=\{e_1, \ldots, e_r\}$, and let $s\geq 1$ be a positive integer. Then for any $s$-fold product $u=e_{i_1}\ldots e_{i_s}$, we have$${\rm reg}\big(I(G)^{(s+1)}:u\big)\leq {\rm reg}(I(G)).$$
\end{lem}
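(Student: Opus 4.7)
The plan is to proceed by induction on $s$, with the base case $s=1$ being precisely Lemma \ref{base}.

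For the inductive step with $s\geq 2$, I would factor $u=u'\cdot e_{i_s}$, where $u'=e_{i_1}\cdots e_{i_{s-1}}$ is an $(s-1)$-fold product and $e_{i_s}=x_ax_b$, and set $J:=(I(G)^{(s)}:u')$. The inductive hypothesis gives ${\rm reg}(J)\leq {\rm reg}(I(G))$. The first step is to verify the colon identity
$$(I(G)^{(s+1)}:u)=(J:x_a)\cap(J:x_b),$$
which directly generalizes the identity $(I(G)^{(2)}:e)=(I(G):x_i)\cap(I(G):x_j)$ used in Lemma \ref{base}. The verification is a short primary-decomposition computation in the style of Lemma \ref{col}: both sides equal $\bigcap_{C\in \mathcal{A}}\mathfrak{p}_C$, where $\mathcal{A}$ consists of the minimal vertex covers $C$ of $G$ with $|C\cap e_{i_j}|=1$ for every $j=1,\ldots,s$. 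For the right-hand side, one uses that every minimal vertex cover meets $e_{i_s}$, so the combined condition ``$x_a\notin C$ or $x_b\notin C$'' is equivalent to $|C\cap e_{i_s}|=1$.

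Given the identity, I would then exploit the Mayer--Vietoris short exact sequence
$$0\to S/(I(G)^{(s+1)}:u)\to S/(J:x_a)\oplus S/(J:x_b)\to S/\bigl((J:x_a)+(J:x_b)\bigr)\to 0,$$
which yields
$${\rm reg}\bigl(I(G)^{(s+1)}:u\bigr)\leq \max\bigl\{{\rm reg}(J:x_a),\,{\rm reg}(J:x_b),\,{\rm reg}\bigl((J:x_a)+(J:x_b)\bigr)+1\bigr\}.$$
Since $J$ is a squarefree monomial ideal, the bound ${\rm reg}(J:x)\leq {\rm reg}(J)$ for each variable $x$ follows by applying \cite[Lemma 2.10]{b} (to control the regularity of $(J,x)$) in combination with the short exact sequence $0\to S/(J:x)(-1)\to S/J\to S/(J,x)\to 0$. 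Together with the inductive hypothesis, this gives ${\rm reg}(J:x_a),\,{\rm reg}(J:x_b)\leq {\rm reg}(I(G))$.

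The remaining and most delicate step is to show
$${\rm reg}\bigl((J:x_a)+(J:x_b)\bigr)\leq {\rm reg}(I(G))-1.$$
The aim is to mimic the argument of Lemma \ref{base}: identify this sum as $I(H)+\mathfrak{p}_W$, where $H$ is an induced subgraph of $G$ whose disjoint union with $e_{i_s}$ is again an induced subgraph of $G$, and $W\subseteq V(G)$ is a set of variables. Once such a structural description is secured, \cite[Lemma 2.10]{b}, \cite[Lemma 3.1]{ha}, and \cite[Lemma 3.2]{ht} combine exactly as in the proof of Lemma \ref{base} to give ${\rm reg}(I(H))\leq {\rm reg}(I(G))-1$ and hence the required bound. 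The main obstacle is that for $s\geq 2$ the ideal $J$ is no longer an edge ideal but only an intersection of minimal primes of $I(G)$ selected by the constraints $|C\cap e_{i_j}|=1$ for $j<s$, so the structural identification of $(J:x_a)+(J:x_b)$ requires a careful analysis of how $x_a$, $x_b$, their $G$-neighborhoods, and the endpoints of $u'$ interact with the defining constraints of $J$.
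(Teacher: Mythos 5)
Your reduction is sound as far as it goes: the identity $(I(G)^{(s+1)}:u)=(J:x_a)\cap(J:x_b)$ with $J=(I(G)^{(s)}:u')$ is correct (both sides equal $\bigcap_{C\in\mathcal{A}}\mathfrak{p}_C$, where $\mathcal{A}$ consists of the minimal vertex covers meeting each $e_{i_j}$ in exactly one vertex), the Mayer--Vietoris bound is the right tool, and $\mathrm{reg}(J:x)\leq \mathrm{reg}(J)$ does hold for the squarefree ideal $J$ (this is essentially what \cite[Lemma 4.2]{s3} provides). The genuine gap is exactly the step you yourself flag as delicate: you never prove $\mathrm{reg}\bigl((J:x_a)+(J:x_b)\bigr)\leq \mathrm{reg}(I(G))-1$, and the only information your induction carries about $J$ is the numerical bound $\mathrm{reg}(J)\leq \mathrm{reg}(I(G))$, which is not enough. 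In Lemma \ref{base} the decisive input is structural: $(I(G):x_i)+(I(G):x_j)=I(H)+({\rm variables})$ with $H\sqcup e$ an induced subgraph of $G$. To imitate this for $s\geq 2$ you would need to know that $J$ is, up to variables, the edge ideal of a graph of regularity at most $\mathrm{reg}(I(G))$ whose structure interacts well with the edge $x_ax_b$ --- in effect a symbolic-power analogue of Banerjee's even-connection description of $(I^{s+1}:u)$, which neither the paper nor your induction supplies. Moreover, your intended identification ``$(J:x_a)+(J:x_b)=I(H)+\mathfrak{p}_W$ with $H$ an induced subgraph of $G$'' is not the right target even in the first nontrivial case: for $s=2$ one has $J=I(G')+(A)$ by Lemma \ref{seccol}, the relevant $H$ is an induced subgraph of the modified graph $G'$, and one must additionally invoke $\mathrm{reg}(I(G'))\leq \mathrm{reg}(I(G))$ from \cite{bn}; for larger $s$ the analogous statement is precisely what is missing.

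The paper circumvents this by running the induction differently: it peels off the \emph{first} edge and changes the graph. By Lemma \ref{col}, $(I(G)^{(s+1)}:u)=\bigl((I(G)^{(2)}:e_{i_1})^{(s)}:e_{i_2}\cdots e_{i_s}\bigr)$, and by Lemma \ref{seccol} the inner ideal equals $I(G')+(A)$; after coloning out the variables of $A$ (which cannot raise the regularity, again by \cite[Lemma 4.2]{s3}) and absorbing the edges that meet $A$, the problem becomes the same lemma for the graph $G'\setminus A$ and a shorter product of edges, to which the induction hypothesis (quantified over all graphs) applies, and one concludes with $\mathrm{reg}(I(G'\setminus A))\leq \mathrm{reg}(I(G'))\leq \mathrm{reg}(I(G))$. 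To salvage your route you would have to strengthen the induction so that it carries a structural description of $J$ of this kind along with the regularity bound; as written, the argument is incomplete precisely at its crucial point.
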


\begin{proof}
We use induction on $s$. For $s=1$, the assertion follows from Lemma \ref{base}. Thus, assume that $s\geq 2$. Let $x$ and $y$ denote the endpoints of the edge $e_{i_1}$. Also, let $G'$ be the graph with edge ideal$$I(G')=I(G)+\big(x_px_q: x_p\in N_G(x_i), x_q\in N_G(x_j), x_p\neq x_q\big).$$Using Lemma \ref{seccol}, there exists a subset $A$ of variables with the property that$$\big(I(G)^{(2)}:e_{i_1}\big)=I(G')+(A).$$We know from Lemma \ref{col} that
\begin{align*}
& {\rm reg}\big(I(G)^{(s+1)}:u\big)={\rm reg}\bigg(\big(I(G)^{(2)}:e_{i_1}\big)^{(s)}: e_{i_2}\ldots e_{i_s}\bigg)\\ & ={\rm reg}\bigg(\big(I(G'), A\big)^{(s)}: e_{i_2}\ldots e_{i_s}\bigg)\\ & ={\rm reg}\bigg(\big(I(G'\setminus A), A\big)^{(s)}: e_{i_2}\ldots e_{i_s}\bigg).
\end{align*}
Without lose of generality, we may suppose that there exists an integer $\ell$ with the property that $e_{i_2}, \ldots, e_{i_{\ell}}$ have nonempty intersection with $A$, while $e_{i_{\ell+1}}, \ldots, e_{i_s}$ have no common vertex with $A$ (we set $\ell=1$ if neither of $e_{i_2}, \ldots, e_{i_s}$ is intersecting $A$). For every integer $k$ with $2\leq k\leq \ell$, let $x_k$ and $y_k$ be the endpoints of $e_{i_k}$, with $x_k\in A$. Then
\begin{align*}
& {\rm reg}\big(I(G)^{(s+1)}:u\big)={\rm reg}\bigg(\big(I(G'\setminus A), A\big)^{(s)}: e_{i_2}\ldots e_{i_s}\bigg)\\ & ={\rm reg}\bigg(\big(\big(\big(I(G'\setminus A), A\big)^{(s)}: x_2\ldots x_{\ell}\big): e_{i_{\ell+1}}\ldots e_{i_s}\big):y_2\ldots y_{\ell}\bigg)\\ & ={\rm reg}\bigg(\big(\big(I(G'\setminus A), A\big)^{(s-\ell+1)}: e_{i_{\ell+1}}\ldots e_{i_s}\big):y_2\ldots y_{\ell}\bigg),
\end{align*}
where the last equality follows from the assumption that $x_2, \ldots, x_{\ell}$ belong to $A$. We conclude from \cite[Lemma 4.2]{s3} and the above equalities that
\[
\begin{array}{rl}
{\rm reg}\big(I(G)^{(s+1)}:u\big)\leq {\rm reg}\bigg(\big(I(G'\setminus A), A\big)^{(s-\ell+1)}: e_{i_{\ell+1}}\ldots e_{i_s}\bigg).
\end{array} \tag{2} \label{2}
\]
By the choice of $\ell$, we know that $e_{i_{\ell+1}}, \ldots, e_{i_s}$ are edges of $G\setminus A$. Thus, $e_{i_{\ell+1}}\ldots e_{i_s}$ belongs to $I(G'\setminus A)^{(s-\ell)}$. Hence, for every variable $z\in A$, we have$$ze_{i_{\ell+1}}\ldots e_{i_s}\in \big(I(G'\setminus A), A\big)^{(s-\ell+1)}.$$In other words,$$z\in \bigg(\big(I(G'\setminus A), A\big)^{(s-\ell+1)}: e_{i_{\ell+1}}\ldots e_{i_s}\bigg).$$Therefore,$$\bigg(\big(I(G'\setminus A), A\big)^{(s-\ell+1)}: e_{i_{\ell+1}}\ldots e_{i_s}\bigg)=\bigg(\big(I(G'\setminus A)\big)^{(s-\ell+1)}: e_{i_{\ell+1}}\ldots e_{i_s}\bigg)+(A).$$It follows from inequality (\ref{2}) and the above equality that
\begin{align*}
& {\rm reg}\big(I(G)^{(s+1)}:u\big)\leq {\rm reg}\bigg(\big(\big(I(G'\setminus A)\big)^{(s-\ell+1)}: e_{i_{\ell+1}}\ldots e_{i_s}\big)+(A)\bigg)\\ & ={\rm reg}\bigg(\big(I(G'\setminus A)\big)^{(s-\ell+1)}: e_{i_{\ell+1}}\ldots e_{i_s}\bigg).
\end{align*}
As $s-\ell< s$, we deduce from the induction hypothesis that
\begin{align*}
{\rm reg}\big(I(G)^{(s+1)}:u\big)\leq {\rm reg}\bigg(\big(I(G'\setminus A)\big)^{(s-\ell+1)}: e_{i_{\ell+1}}\ldots e_{i_s}\bigg) \leq {\rm reg}(I(G'\setminus A)).
\end{align*}
Thus, using \cite[Lemma 3.1]{ha} and  Lemma \ref{base}, we have$${\rm reg}\big(I(G)^{(s+1)}:u\big)\leq {\rm reg}(I(G'\setminus A))\leq {\rm reg}(I(G'))\leq {\rm reg}(I(G)).$$
\end{proof}

We are now ready to prove the first main result of this paper.

\begin{thm} \label{1main}
For any graph $G$ and for every integer $s\geq 1$, we have$${\rm reg}(I(G)^{(s+1)})\leq \max\bigg\{{\rm reg}(I(G))+2s, {\rm reg}\big(I(G)^{(s+1)}+I(G)^s\big)\bigg\}.$$
\end{thm}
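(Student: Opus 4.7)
The plan is to obtain the theorem as a direct synthesis of Lemma \ref{rfirst} and Lemma \ref{lemreg}, which were proved precisely to be combined here. First I would invoke Lemma \ref{rfirst}, which, upon writing $G(I(G)^s) = \{u_1, \ldots, u_m\}$, yields
$${\rm reg}(I(G)^{(s+1)}) \leq \max\bigg\{{\rm reg}\big(I(G)^{(s+1)}:u_i\big) + 2s,\ 1 \leq i \leq m,\ {\rm reg}\big(I(G)^{(s+1)} + I(G)^s\big)\bigg\}.$$
It then remains to bound ${\rm reg}(I(G)^{(s+1)}:u_i)$ uniformly in $i$ by ${\rm reg}(I(G))$, which will convert the first term in the maximum into ${\rm reg}(I(G)) + 2s$ and deliver the claimed inequality.

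For this uniform bound I would observe that any minimal monomial generator $u_i$ of $I(G)^s$ is, by definition of a power of a monomial ideal, an $s$-fold product of generators of $I(G)$, i.e., $u_i = e_{j_1} e_{j_2}\cdots e_{j_s}$ for some (not necessarily distinct) edges $e_{j_1},\ldots,e_{j_s}$ of $G$. Hence $u_i$ is exactly the kind of monomial treated in Lemma \ref{lemreg}, which gives
$${\rm reg}\big(I(G)^{(s+1)}:u_i\big) \leq {\rm reg}(I(G))$$
for every $1 \leq i \leq m$. Substituting this into the bound from Lemma \ref{rfirst} finishes the proof.

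I do not anticipate any obstacle in this last step, because the two genuine pieces of work have already been carried out in the preceding lemmas: Lemma \ref{rfirst} supplied the short-exact-sequence/colon-ideal decomposition based on Banerjee's ordering of the minimal generators of $I(G)^s$, and Lemma \ref{lemreg} established the uniform colon-ideal regularity bound by induction on $s$, after using Lemma \ref{col} to rewrite $(I(G)^{(s+1)}:u_i)$ as an iterated colon of $(I(G)^{(2)}:e_{j_1})^{(s)}$ and reducing via Lemma \ref{base} to the base case. With these in hand, Theorem \ref{1main} itself is a one-line substitution, so the write-up is essentially two sentences.
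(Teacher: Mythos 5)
Your proposal is correct and coincides with the paper's own proof, which deduces Theorem \ref{1main} directly from Lemma \ref{rfirst} combined with the uniform bound ${\rm reg}(I(G)^{(s+1)}:u_i)\leq {\rm reg}(I(G))$ of Lemma \ref{lemreg}. The observation that every minimal generator of $I(G)^s$ is an $s$-fold product of edges, so that Lemma \ref{lemreg} applies, is exactly the (implicit) step in the paper as well.
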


\begin{proof}
The assertion follows from Lemmata \ref{rfirst} and \ref{lemreg}.
\end{proof}

In view of Theorem \ref{1main}, in order to bound the regularity of $I(G)^{(s+1)}$, we need to estimate the regularity of the ideal $I(G)^{(s+1)}+I(G)^s$. Let $G$ be a graph and $k\geq 1$ be an integer. Rinaldo, Terai and Yoshida \cite[Lemma 3.10]{rty} proved that if $G$ has no odd cycle of length at most $2k-1$, then for every integer $s\leq k$, the equality $I(G)^{(s)}=I(G)^s$ holds. Using this result, we prove the following proposition, which states that the ideal $I(G)^{(s+1)}+I(G)^s$ has a nice description, if $s$ is sufficiently small.

\begin{prop} \label{cont}
Let $G$ be a graph and let $s\geq 1$ be an integer with the property that $G$ has no odd cycle of length at most $2s-3$. Then $I(G)^{(s+1)}\subseteq I(G)^s$.
\end{prop}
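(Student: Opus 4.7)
The plan is to reduce directly to the Rinaldo--Terai--Yoshida equality cited just before the proposition. The key observation is that a monomial $u$ lies in $I(G)^{(s+1)}$ if and only if $\sum_{x_i \in C}{\rm deg}_{x_i}(u) \geq s+1$ for every $C \in \mathcal{C}(G)$, and that dividing $u$ by any edge of $G$ can decrease this sum by at most $2$. Thus peeling off a single edge puts the quotient into $I(G)^{(s-1)}$, a symbolic power which, under the stated odd-girth hypothesis, collapses to an ordinary power by \cite[Lemma 3.10]{rty} applied with $k=s-1$.

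Concretely, since the inclusion can be tested on monomials, I would fix a monomial $u \in I(G)^{(s+1)}$. As $s+1 \geq 1$, we have $I(G)^{(s+1)} \subseteq I(G)^{(1)} = I(G)$, so there exists an edge $e = x_p x_q$ of $G$ with $e \mid u$. Next, using the primary decomposition $I(G)^{(s+1)} = \bigcap_{C \in \mathcal{C}(G)} \mathfrak{p}_C^{s+1}$, for every $C \in \mathcal{C}(G)$ I would estimate
\[
\sum_{x_i \in C} {\rm deg}_{x_i}(u/e) = \sum_{x_i \in C} {\rm deg}_{x_i}(u) - |C \cap \{x_p, x_q\}| \geq (s+1) - 2 = s - 1,
\]
since $|C \cap \{x_p, x_q\}| \leq 2$. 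Hence $u/e \in \bigcap_{C \in \mathcal{C}(G)} \mathfrak{p}_C^{s-1} = I(G)^{(s-1)}$. Then, since by hypothesis $G$ has no odd cycle of length at most $2(s-1)-1 = 2s-3$, \cite[Lemma 3.10]{rty} with $k = s-1$ gives $I(G)^{(s-1)} = I(G)^{s-1}$, so $u/e \in I(G)^{s-1}$. Finally, $u = e \cdot (u/e) \in I(G) \cdot I(G)^{s-1} = I(G)^s$, which is the desired inclusion.

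I do not anticipate any real obstacle here. The argument is a one-edge peeling that drops the symbolic exponent by exactly two, and so is precisely matched to the RTY regime one step below the current index; the only genuine computation is the elementary lower bound on $\sum_{x_i \in C} {\rm deg}_{x_i}(u/e)$. The reason the proposition yields an inclusion $I(G)^{(s+1)} \subseteq I(G)^s$ rather than the equality $I(G)^{(s)} = I(G)^s$ (which would require the stronger hypothesis of RTY at level $s$) is exactly that a single edge can be covered twice by a minimal vertex cover, costing two units of the symbolic-order bound.
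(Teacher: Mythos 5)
Your proof is correct and follows essentially the same route as the paper: peel off one edge $e$ dividing $u$, observe that $u/e\in I(G)^{(s-1)}$, and invoke the Rinaldo--Terai--Yoshida equality $I(G)^{(s-1)}=I(G)^{s-1}$ under the odd-cycle hypothesis. The only difference is that you justify the containment $(I(G)^{(s+1)}:e)\subseteq I(G)^{(s-1)}$ explicitly by the degree count over minimal vertex covers, a step the paper states without detail.
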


\begin{proof}
Let $u$ be a monomial in $I(G)^{(s+1)}$. Since $I(G)^{(s+1)}\subseteq I(G)$, there is an edge $e:=xy\in E(G)$ which divides $u$. Set $v:=u/(xy)$. Then$$v\in (I(G)^{(s+1)}:xy)\subseteq I(G)^{(s-1)}.$$As $G$ has no odd cycle of length at most $2s-3$, it follows from \cite[Lemma 3.10]{rty} that $I(G)^{(s-1)}=I(G)^{s-1}$. Therefore, $v\in I(G)^{s-1}$. This implies that $u=(xy)v=ev$ belongs to $I(G)^s$.
\end{proof}

As an immediate consequence of Theorem \ref{1main} and Proposition \ref{cont}, we obtain the following theorem.

\begin{thm} \label{2main}
Let $G$ be a graph and let $s\geq 1$ be an integer with the property that $G$ has no odd cycle of length at most $2s-3$. Then $${\rm reg}(I(G)^{(s+1)})\leq \max\big\{{\rm reg}(I(G))+2s, {\rm reg}(I(G)^s)\big\}.$$
\end{thm}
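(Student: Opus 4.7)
The plan is to combine the two preceding results in the most direct way possible. By the hypothesis that $G$ has no odd cycle of length at most $2s-3$, Proposition \ref{cont} applies and yields the containment $I(G)^{(s+1)}\subseteq I(G)^s$. This containment is the only place where the odd-girth hypothesis enters the argument.

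Given that containment, the sum $I(G)^{(s+1)}+I(G)^s$ collapses to $I(G)^s$, so in particular
$${\rm reg}\bigl(I(G)^{(s+1)}+I(G)^s\bigr)={\rm reg}(I(G)^s).$$
I would then invoke Theorem \ref{1main} directly: it gives
$${\rm reg}(I(G)^{(s+1)})\leq \max\bigl\{{\rm reg}(I(G))+2s,\, {\rm reg}\bigl(I(G)^{(s+1)}+I(G)^s\bigr)\bigr\},$$
and substituting the equality above turns the right-hand side into $\max\{{\rm reg}(I(G))+2s,\, {\rm reg}(I(G)^s)\}$, which is exactly the desired bound.

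There is no real obstacle here; the theorem is a formal consequence of the two previous results. The only thing worth double-checking is that the hypothesis on $G$ in the current theorem matches the hypothesis of Proposition \ref{cont} verbatim (both require no odd cycle of length at most $2s-3$), so that Proposition \ref{cont} can be applied with the same value of $s$. Since Theorem \ref{1main} requires no extra assumption on $G$, nothing more is needed.
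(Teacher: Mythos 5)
Your proposal is correct and is exactly the paper's argument: Proposition \ref{cont} gives $I(G)^{(s+1)}\subseteq I(G)^s$, so $I(G)^{(s+1)}+I(G)^s=I(G)^s$, and substituting into Theorem \ref{1main} yields the stated bound. The paper presents this theorem as an immediate consequence of those two results, just as you do.
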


We can now prove that Conjecture \ref{conj3} is true for $s=2,3$.

\begin{cor} \label{twth}
For any graph $G$ we have,
\begin{itemize}
\item[(i)] ${\rm reg}(I(G)^{(2)})\leq {\rm reg}(I(G))+2$, and
\item[(ii)] ${\rm reg}(I(G)^{(3)})\leq {\rm reg}(I(G))+4$.
\end{itemize}
\end{cor}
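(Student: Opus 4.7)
The plan is to deduce both inequalities as direct consequences of Theorem~\ref{1main}, together with simple containments between symbolic and ordinary powers and the known bound of Banerjee and Nevo for ${\rm reg}(I(G)^2)$.

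For part~(i), I would apply Theorem~\ref{1main} with $s=1$, which yields
\[
{\rm reg}(I(G)^{(2)}) \leq \max\bigl\{{\rm reg}(I(G))+2,\ {\rm reg}(I(G)^{(2)}+I(G))\bigr\}.
\]
The trivial containment $I(G)^{(2)} \subseteq I(G)$ (every minimal prime of $I(G)$ contains $I(G)$, so $\mathfrak{p}_C^2 \subseteq \mathfrak{p}_C$ gives $I(G)^{(2)} \subseteq \bigcap_C \mathfrak{p}_C = I(G)$) collapses the sum to $I(G)^{(2)}+I(G)=I(G)$, and the maximum is just ${\rm reg}(I(G))+2$.

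For part~(ii), I would apply Theorem~\ref{1main} with $s=2$ to get
\[
{\rm reg}(I(G)^{(3)}) \leq \max\bigl\{{\rm reg}(I(G))+4,\ {\rm reg}(I(G)^{(3)}+I(G)^2)\bigr\}.
\]
The key observation is that Proposition~\ref{cont} with $s=2$ requires $G$ to have no odd cycle of length at most $2\cdot 2 - 3 = 1$, a condition vacuously satisfied by every graph (cycles have length $\geq 3$). Hence $I(G)^{(3)} \subseteq I(G)^2$, so $I(G)^{(3)}+I(G)^2 = I(G)^2$. By the result of Banerjee and Nevo quoted in the introduction, ${\rm reg}(I(G)^2) \leq {\rm reg}(I(G))+2$, and so the maximum above is dominated by ${\rm reg}(I(G))+4$.

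There is essentially no obstacle here, since all the work has been done in Theorem~\ref{1main}, Proposition~\ref{cont}, and the Banerjee--Nevo theorem; the only thing to check is the trivial containments and that the hypothesis of Proposition~\ref{cont} is vacuous when $s=2$. Equivalently, one could invoke Theorem~\ref{2main} with $s=1$ and $s=2$ directly, noting that ${\rm reg}(I(G)^1)={\rm reg}(I(G))$ in the first case and applying Banerjee--Nevo in the second.
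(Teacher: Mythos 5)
Your proposal is correct and follows essentially the same route as the paper: the paper deduces both parts from Theorem~\ref{2main} (which is exactly Theorem~\ref{1main} combined with Proposition~\ref{cont}, whose odd-cycle hypothesis is vacuous for $s=1,2$), using the Banerjee--Nevo bound ${\rm reg}(I(G)^2)\leq {\rm reg}(I(G))+2$ for part~(ii). Your checks of the containments $I(G)^{(2)}\subseteq I(G)$ and $I(G)^{(3)}\subseteq I(G)^2$ are accurate, so nothing is missing.
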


\begin{proof}
(i) immediately follows from Theorem \ref{2main}, while (ii) follows from Theorem \ref{2main} and \cite[Theorem 1.1]{bn}.
\end{proof}

Using the above mentioned result of Rinaldo, Terai and Yoshida \cite[Lemma 3.10]{rty}, we are also able to prove the following corollary which extends \cite[Theorem 1.1(ii)]{bn}.

\begin{cor} \label{regord}
Let $G$ be a graph and let $k\geq 1$ be an integer with the property that $G$ has no odd cycle of length at most $2k-1$. Then for every integer $s\leq k$, we have$${\rm reg}(I(G)^s)\leq 2s+{\rm reg}(I(G))-2.$$
\end{cor}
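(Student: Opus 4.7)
The plan is to combine Theorem \ref{2main} with the result of Rinaldo, Terai and Yoshida \cite[Lemma 3.10]{rty} via a short induction on $s$. The hypothesis that $G$ has no odd cycle of length at most $2k-1$ ensures that ordinary and symbolic powers agree in the range we care about, which lets us replace ${\rm reg}(I(G)^s)$ by ${\rm reg}(I(G)^{(s)})$ and apply Theorem \ref{2main}.

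First I would record the base of the induction. For $s=1$ the claimed bound reduces to ${\rm reg}(I(G))\leq {\rm reg}(I(G))$, which is trivial. For the inductive step, assume the inequality has been established for some $s$ with $1\leq s\leq k-1$, and consider the power $s+1\leq k$. Since $2(s+1)-3=2s-1\leq 2k-1$, the graph $G$ has no odd cycle of length at most $2s-1$, and in particular none of length at most $2s-3$. Therefore Theorem \ref{2main} applies and gives
$${\rm reg}(I(G)^{(s+1)})\leq \max\bigl\{{\rm reg}(I(G))+2s,\ {\rm reg}(I(G)^s)\bigr\}.$$
By the inductive hypothesis, ${\rm reg}(I(G)^s)\leq 2s+{\rm reg}(I(G))-2$, which is strictly smaller than ${\rm reg}(I(G))+2s$. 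Hence
$${\rm reg}(I(G)^{(s+1)})\leq {\rm reg}(I(G))+2s=2(s+1)+{\rm reg}(I(G))-2.$$

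To pass from the symbolic power back to the ordinary power, I would invoke \cite[Lemma 3.10]{rty}: since $G$ has no odd cycle of length at most $2k-1$ and $s+1\leq k$, we have $I(G)^{s+1}=I(G)^{(s+1)}$, so the displayed inequality yields ${\rm reg}(I(G)^{s+1})\leq 2(s+1)+{\rm reg}(I(G))-2$, closing the induction. The same equality $I(G)^s=I(G)^{(s)}$ for $s\leq k$ is what makes the inductive hypothesis usable on the right-hand side of Theorem \ref{2main} without incurring any loss.

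I do not expect any serious obstacle: the entire argument is a bookkeeping exercise in feeding Theorem \ref{2main} into an induction whose inductive hypothesis has already been translated from symbolic to ordinary powers via \cite[Lemma 3.10]{rty}. The only point demanding care is to verify that the parity condition $2s-3\leq 2k-1$ required by Theorem \ref{2main} at the inductive step, and the condition $s+1\leq k$ required to apply \cite[Lemma 3.10]{rty}, both hold simultaneously for $s\leq k-1$, which they clearly do.
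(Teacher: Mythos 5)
Your proposal is correct and follows essentially the same route as the paper: induction on the exponent, feeding Theorem \ref{2main} (whose odd-cycle hypothesis is vacuously or trivially satisfied in the relevant range) into the inductive hypothesis, and using \cite[Lemma 3.10]{rty} to identify the symbolic power with the ordinary power for exponents at most $k$. The only differences are cosmetic (you index the step as $s\to s+1$ while the paper writes it as $s-1\to s$), so there is nothing substantive to add.
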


\begin{proof}
We use induction on $s$. For $s=1$, there is nothing to prove. Thus, assume that $s\geq 2$. It follows from the induction hypothesis that$${\rm reg}(I(G)^{s-1})\leq 2s+{\rm reg}(I(G))-4.$$Hence, we conclude from \cite[Lemma 3.10]{rty} and Theorem \ref{2main} that
\begin{align*}
& {\rm reg}(I(G)^s)={\rm reg}(I(G)^{(s)})\leq \max\big\{{\rm reg}(I(G))+2s-2, {\rm reg}(I(G)^{s-1})\big\}\\ & = 2s+{\rm reg}(I(G))-2.
\end{align*}
\end{proof}

Let $G$ be a graph and $k\geq 1$ be an integer. Assume that $G$ has no odd cycle of length at most $2k-1$. It follows from \cite[Lemma 3.10]{rty} and Corollary \ref{regord} that for every integer $s\leq k$, we have ${\rm reg}(I(G)^{(s)})\leq 2s+{\rm reg}(I(G))-2$. The following corollary shows that the same inequality holds for $s=k+1$, too.

\begin{cor} \label{resycy}
Let $G$ be a graph and let $k\geq 1$ be an integer with the property that $G$ has no odd cycle of length at most $2k-1$. Then for every integer $s\leq k+1$, we have$${\rm reg}(I(G)^{(s)})\leq 2s+{\rm reg}(I(G))-2.$$
\end{cor}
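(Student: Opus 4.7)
The plan is to split into two cases according to whether $s\leq k$ or $s=k+1$, and to reduce each to results already established earlier in the paper.

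For $s\leq k$, the hypothesis that $G$ has no odd cycle of length at most $2k-1$ implies, via \cite[Lemma 3.10]{rty}, that $I(G)^{(s)}=I(G)^{s}$. Hence the desired bound ${\rm reg}(I(G)^{(s)})\leq 2s+{\rm reg}(I(G))-2$ is nothing but Corollary \ref{regord} applied to $s$. So this range is essentially free.

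The only genuine case is $s=k+1$. I would invoke Theorem \ref{2main} with parameter $k$ in place of $s$: the hypothesis of that theorem requires $G$ to have no odd cycle of length at most $2k-3$, which is strictly weaker than our standing assumption and hence automatic. Theorem \ref{2main} then yields
\[
{\rm reg}(I(G)^{(k+1)})\leq \max\bigl\{{\rm reg}(I(G))+2k,\; {\rm reg}(I(G)^{k})\bigr\}.
\]
The first term of the maximum is exactly $2(k+1)+{\rm reg}(I(G))-2$, which is the bound we want. For the second term, Corollary \ref{regord} (applied to $s=k$, which is allowed by hypothesis) gives ${\rm reg}(I(G)^{k})\leq 2k+{\rm reg}(I(G))-2\leq {\rm reg}(I(G))+2k$. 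Thus both terms in the maximum are dominated by ${\rm reg}(I(G))+2k=2(k+1)+{\rm reg}(I(G))-2$, and the inequality follows.

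Since every ingredient has already been supplied earlier in the paper, there is no real obstacle; the only point to verify carefully is that the odd-cycle hypothesis needed to apply Theorem \ref{2main} at parameter $k$ is indeed implied by the stronger hypothesis in the statement of the corollary, and that the Corollary \ref{regord} bound on ${\rm reg}(I(G)^{k})$ is compatible with the first term in the maximum. Both of these are immediate.
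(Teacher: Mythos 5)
Your proposal is correct and follows essentially the same route as the paper: the case $s\leq k$ is handled by \cite[Lemma 3.10]{rty} together with Corollary \ref{regord}, and the case $s=k+1$ by applying Theorem \ref{2main} (with parameter $k$) and bounding ${\rm reg}(I(G)^k)$ via Corollary \ref{regord}. The only difference is that you spell out the verification of the weaker odd-cycle hypothesis and the comparison of the two terms in the maximum, which the paper leaves implicit.
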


\begin{proof}
The assertion follows from Theorem \ref{2main} and Corollary \ref{regord}.
\end{proof}


\section{Graphs with chordal complement} \label{sec4}

Let $G$ be a graph with the property that the complementary graph $\overline{G}$ is chordal. As we mentioned in the introduction, for every integer $s\geq 1$, we have ${\rm reg}(I(G)^s)=2s$. However, the regularity of symbolic powers of $I(G)$ is not in general known. It immediately follows from Corollary \ref{twth} that ${\rm reg}(I(G)^{(s)})=2s$, for $s=2, 3$. In Theorem \ref{fococh}, we show that the same equality is also true for $s=4$. The proof of this result is based on Theorem \ref{1main}. To use this theorem, we need to estimate the regularity of the ideal $I(G)^{(4)}+I(G)^3$. The following lemma is needed for this estimation.

\begin{lem} \label{fouthr}
Let $G$ be a gap-free graph and let $u$ be a monomial in the set of minimal monomial generators of $I(G)^2$. Assume that $X_0$ is the set of variables belonging to $(I(G)^{(4)}: u)$ (if any). Then $$(I(G)^{(4)}:u)+(I(G)^3:u)=(I(G)^3:u)+(X_0).$$
\end{lem}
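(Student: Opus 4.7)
The containment $(I(G)^3:u)+(X_0)\subseteq (I(G)^{(4)}:u)+(I(G)^3:u)$ is immediate from $X_0\subseteq (I(G)^{(4)}:u)$, so the whole task is the reverse inclusion, and it suffices to show that every monomial $v\in (I(G)^{(4)}:u)$ no variable of which lies in $X_0$ satisfies $vu\in I(G)^3$. The easy sub-case is $v\in I(G)$: writing $v=v'f$ for an edge $f$ and using that $u\in I(G)^2$, one has $vu=v'fu\in I(G)\cdot I(G)^2=I(G)^3$. The whole content of the lemma therefore lies in the case that $A:=\operatorname{supp}(v)$ is an independent set of $G$.

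The first structural step in that case is to exploit the primary decomposition $I(G)^{(4)}=\bigcap_{C\in\mathcal{C}(G)}\mathfrak{p}_C^4$ against a well-chosen minimal vertex cover. Since $A$ is independent, $V(G)\setminus A$ contains a minimal vertex cover $C$, and for any such $C$ we have $\operatorname{supp}(v)\cap C=\emptyset$, so $\sum_{x\in C}\deg_x(vu)=\sum_{x\in C}\deg_x(u)\geq 4$; but $\deg u=4$ forces equality, hence $U:=\operatorname{supp}(u)\subseteq C$. Running over all minimal vertex covers inside $V(G)\setminus A$ (equivalently, over all maximal independent sets of $G$ containing $A$), no maximal independent set containing $A$ meets $U$, and a standard extension argument then shows that every vertex of $u$ has a neighbor in $A$.

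I would then split on $|A|$. If $|A|=1$ with $A=\{y\}$, then $y$ is adjacent to every vertex of $u$; the hypothesis $y\notin X_0$ combined with $vu\in I(G)^{(4)}$ forces $\deg_y(v)\geq 2$ (otherwise $v=y$ is a variable, making $y\in X_0$), and writing $e_1=a_1b_1$ the identity $(ya_1)(yb_1)e_2=y^2u$ exhibits three edges whose product divides $vu$. If $|A|\geq 2$, pick two distinct $y_1,y_2\in A$; then $y_1y_2\mid v$, and it suffices to express $y_1y_2u$ as a product of three edges of $G$. For each vertex $x$ of $u$, select an $A$-neighbor; the pairings $(y_1a_1)(y_2b_1)e_2$ and $(y_1a_2)(y_2b_2)e_1$ work whenever either pair of endpoints admits two distinct $A$-neighbors. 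The remaining sub-case is when $a_1,b_1$ share a single $A$-neighbor $y$ and $a_2,b_2$ share a single $A$-neighbor $y'\neq y$; when $e_1\cap e_2=\emptyset$ the gap-free hypothesis supplies one of the four edges $a_1a_2$, $b_1b_2$, $a_1b_2$, $b_1a_2$, from which the matching mixed factorization (for instance $(yb_1)(y'b_2)(a_1a_2)$) is read off, while the $e_1\cap e_2\neq\emptyset$ case is handled analogously using the shared vertex of $u$.

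The main obstacle is the residual extreme configuration in which a single vertex $y\in A$ is adjacent to every vertex of $U$ and every other element of $A$ has no neighbor in $U$ at all: no second $A$-vertex can sit in any edge of a 3-edge factorization of $y_1y_2u$, so the pairing strategy above collapses. The plan is to use $y\notin X_0$ to force $\deg_y(v)\geq 2$ and thereby reduce to the $|A|=1$ argument. Concretely, take a minimal vertex cover $C_y$ witnessing $y\notin X_0$, so $y\in C_y$ and $|C_y\cap e_1|=|C_y\cap e_2|=1$. For each $y'\in (A\cap C_y)\setminus\{y\}$, the minimality of $C_y$ produces some $w_{y'}\in N(y')\setminus C_y$, and since $y'$ has no neighbor in $U$ the vertex $w_{y'}$ lies outside $A\cup U$; iteratively swap each such $y'$ for $w_{y'}$ and then pass to a minimal sub-cover $C_y'$. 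Essentiality of $y$ (adjacent to the vertices of $U$ outside $C_y$) and of the vertices in $C_y\cap U$ (adjacent to their edge-partners outside $C_y$) forces $y\in C_y'$, $A\cap C_y'=\{y\}$, and $|C_y'\cap e_i|=1$, giving $\sum_{x\in C_y'}\deg_x(vu)=\deg_y(v)+2\geq 4$ and hence $\deg_y(v)\geq 2$. Verifying that this ``delete, replace, and re-minimize'' procedure preserves all the required combinatorial data is the single most delicate step of the proof.
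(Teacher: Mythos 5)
Your skeleton runs parallel to the paper's proof: the reduction to monomials $v\notin I(G)$ with independent support $A$, the cover-and-degree-count showing every vertex of $u$ has a neighbour in $A$, the three-edge factorizations $(y_1a_1)(y_2b_1)e_2$, and the use of gap-freeness to supply a cross edge between disjoint $e_1,e_2$ are all exactly the paper's Cases 1--3. The genuine divergence is the endgame, the ``residual extreme configuration'' in which a single $y\in A$ is the unique $A$-neighbour of every vertex of $U=\mathrm{supp}(u)$. The paper handles this by finding a second suitable neighbour inside $\mathrm{supp}(v/y)$ (which may be $y$ itself) via a tailored cover argument, and invokes $X_0$ only when $U$ induces a complete graph (so $U\cup\{y\}$ is a $K_5$ and $y\in X_0$). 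You instead use $y\notin X_0$ uniformly to force $\deg_y(v)\geq 2$ and then reuse the $|A|=1$ factorization; this is legitimate and in fact more uniform, the $K_4$ sub-case being silently absorbed because there $y\in X_0$, contradicting your standing hypothesis. (Two small presentational points: the inference that any witnessing cover $C_y$ satisfies $y\in C_y$ and $|C_y\cap e_i|=1$ is correct, but only because in this configuration $y$ is adjacent to all of $U$, which you should say; and your ``$e_1\cap e_2\neq\emptyset$ handled analogously'' sub-case with $y\neq y'$ is vacuous, since the shared endpoint forces $y=y'$.)

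The step that fails as written is the cover surgery. Deleting $y'\in(A\cap C_y)\setminus\{y\}$ uncovers \emph{every} edge joining $y'$ to $V(G)\setminus C_y$, and adding the single vertex $w_{y'}$ re-covers only one of them; so the swapped set need not be a vertex cover at all, and ``pass to a minimal sub-cover'' may be meaningless. The repair is easy and stays within your plan: either add all of $N_G(y')\setminus C_y$ (these vertices still avoid $A\cup U$, since $y'$ has no neighbour in $U$ and $A$ is independent, so your essentiality argument then goes through verbatim and gives $\deg_y(v)+2\geq 4$), or bypass $C_y$ entirely: because $y\notin X_0$ there is an independent set $T\subseteq U$ with $\sum_{x\in T}\deg_x(u)\geq 2$ (a nonadjacent pair of endpoints when $e_1,e_2$ are disjoint; the repeated vertex when they meet), and since no vertex of $A\setminus\{y\}$ has a neighbour in $U$, the set $T\cup(A\setminus\{y\})$ is independent and extends to a maximal independent set whose complement is a minimal vertex cover $C'$ with $y\in C'$, $C'\cap A=\{y\}$ and $\sum_{x\in C'}\deg_x(u)\leq 2$; then $vu\in\mathfrak{p}_{C'}^4$ forces $\deg_y(v)\geq 2$. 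With that replacement your argument is complete and constitutes a valid, slightly streamlined alternative to the paper's case analysis.
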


\begin{proof}
The inclusion $"\supseteq"$ is obvious. Hence, we prove the reverse inclusion.

As $u$ is in the set of minimal monomial generators of $I(G)^2$, we may write $u=e_1e_2$, for some edges $e_1, e_2\in E(G)$. Let $v$ be a monomial in  $(I(G)^{(4)}:u)$. If $v\in I(G)$, then $v$ belongs to $(I(G)^3:u)$ and we are done. Thus, assume that $v\notin I(G)$. Let $A$ denote the set of variables dividing $v$. As $v\notin I(G)$, it follows that $A$ is an independent subset of vertices of $G$. We consider the following cases.

\vspace{0.3cm}
{\bf Case 1.} Suppose $e_1=e_2$. Let $x$ and $y$ denote the endpoint of the edge $e_1=e_2$. If no vertex in $A$ is adjacent to $x$, then $A\cup \{x\}$ is an independent subset of vertices of $G$ (in particular, $y\notin A$). This implies that $V(G)\setminus (A\cup \{x\})$ is a vertex cover of $G$. Let $C$ be a minimal vertex of $G$ which is contained in $V(G)\setminus (A\cup \{x\})$. Note that$$C\cap (A\cup \{x, y\})=\{y\}.$$This is a contradiction, because$$vx^2y^2\in I(G)^{(4)}\subseteq \mathfrak{p}_C^4,$$ which implies that $2={\rm deg}_y(vx^2y^2)\geq 4$. Consequently, there is a vertex $z\in A$ which is adjacent to $x$.

Set $v':=v/z$ and denote by $A'$ the set of variables dividing $v'$. Assume that no vertex in $A'$ is adjacent to $y$ (in particular $x\notin A'$). Then $A'\cup \{y\}$ is an independent subset of vertices of $G$. In other words, $V(G)\setminus (A'\cup \{y\})$ is a vertex cover of $G$. Let $C'$ be a minimal vertex of $G$ which is contained in $V(G)\setminus (A'\cup \{y\})$. Note that$$\{x\}\subseteq C'\cap (A\cup \{x, y\})\subseteq\{z,x\}.$$If $C'\cap (A\cup \{x, y\})=\{x\}$, then it follows from$$vx^2y^2\in I(G)^{(4)}\subseteq \mathfrak{p}_{C'}^4$$that $2={\rm deg}_x(vx^2y^2)\geq 4$, which is a contradiction. Therefore,$$C'\cap (A\cup \{x, y\})=\{z,x\}.$$ This implies that $z\notin A'\cup\{y\}$. Consequently, ${\rm deg}_z(vx^2y^2)=1$. Since, ${\rm deg}_x(vx^2y^2)=2$, we conclude that$${\rm deg}_z(vx^2y^2)+{\rm deg}_x(vx^2y^2)=3,$$which contradicts the inclusion $vx^2y^2\in \mathfrak{p}_{C'}^4$. Hence, there is a vertex $z'\in A'$ which is adjacent to $y$.

Note that$$zz'u=zz'x^2y^2=(zx)(z'y)(xy)\in I(G)^3,$$which implies that $zz'\in (I(G)^3:u)$. Since $v$ is divisible by $zz'$, we deduce that $v\in (I(G)^3:u)$.

\vspace{0.3cm}
{\bf Case 2.} Suppose $e_1$ and $e_2$ are distinct edges which have a common vertex. Let $x$ and $y$ (resp. $x$ and $y'$) denote the endpoints of $e_1$ (resp. $e_2$). Let $w$ be an arbitrary vertex in the set $\{x, y, y'\}$. Using a similar argument as in Case 1, there is a vertex $z_w\in A$ which is adjacent to $w$. Thus, there are vertices $z_x, z_y, z_{y'}\in A$ with $xz_x, yz_y, y'z_{y'}\in I(G)$. If $z_x\neq z_y$, then$$z_xz_yu=z_xz_yx^2yy'=(xz_x)(yz_y)(xy')\in I(G)^3.$$Consequently, $z_xz_y\in (I(G)^3:u)$, and since $v$ is divisible by $z_xz_y$, it follows that $v\in (I(G)^3:u)$. Similarly, if $z_x\neq z_{y'}$, it again follows that $v\in (I(G)^3:u)$. Thus, assume that $z_x=z_y=z_{y'}$.

Set $v':=v/z_x$ and denote by $A'$ the set of variables dividing $v'$. Assume that no vertex in $A'$ is adjacent to $x$ (in particular $y, y', z_x\notin A'$). Then $A'\cup \{x\}$ is an independent subset of vertices of $G$. This implies that $V(G)\setminus (A'\cup \{x\})$ is a vertex cover of $G$. Let $C$ be a minimal vertex of $G$ which is contained in $V(G)\setminus (A'\cup \{x\})$. It follows that$$C\cap (A\cup \{x, y, y'\})\subseteq\{y, y', z_x\}.$$ Since, $y, y', z_x\notin A'$, we have$${\rm deg}_y(vx^2yy')={\rm deg}_{y'}(vx^2yy')={\rm deg}_{z_x}(vx^2yy')=1.$$Hence, $vx^2yy'=vu\notin \mathfrak{p}_C^4$ and this is a contradiction. Therefore, there is a vertex $z'_x\in A'$ which is a adjacent to $x$.

We recall that $z_x=z_y=z_{y'}$. Note that$$z_xz_x'u=z_xz_x'x^2yy'=(z_x'x)(z_xy)(xy')\in I(G)^3.$$ Hence, $z_xz_x'$ belongs to $(I(G)^3:u)$. As $z_x'$ divides $v'$, we deduce that $z_xz_x'$ divides $v$ and thus, $v\in (I(G)^3:u)$.

\vspace{0.3cm}
{\bf Case 3.} Suppose $e_1$ and $e_2$ are disjoint edges of $G$. Let $x$ and $y$ (resp. $x'$ and $y'$) denote the endpoints of $e_1$ (resp. $e_2$). As $G$ is a gap-free graph, $e_1$ and $e_2$ can not form a gap in $G$. Without lose of generality, assume that $x$ and $x'$ are adjacent in $G$. Let $w$ be an arbitrary vertex in the set $\{x, y, x', y'\}$. Using a similar argument as in Case 1, there is a vertex $z_w\in A$ which is adjacent to $w$. Thus, there are vertices $z_x, z_y, z_{x'}, z_{y'}\in A$ with $xz_x, yz_y, x'z_{x'}, y'z_{y'}\in I(G)$. If $z_x\neq z_y$, then$$z_xz_yu=z_xz_yxyx'y'=(xz_x)(yz_y)(x'y')\in I(G)^3.$$Consequently, $z_xz_y\in (I(G)^3:u)$. Since $v$ is divisible by $z_xz_y$, it follows that $v\in (I(G)^3:u)$. Hence, suppose $z_x=z_y$. Similarly, if $z_{x'}\neq z_{y'}$, it again follows that $v\in (I(G)^3:u)$. Thus, assume that $z_{x'}=z_{y'}$. If $z_x\neq z_{x'}$, then it follows from $xx'\in I(G)$ and the equalities $z_x=z_y$ and $z_{x'}=z_{y'}$ that$$z_xz_{x'}u=z_xz_{x'}xyx'y'=(yz_y)(y'z_{y'})(xx')\in I(G)^3.$$Consequently, $z_xz_{x'}\in (I(G)^3:u)$. Since $v$ is divisible by $z_xz_{x'}$, it follows that $v\in (I(G)^3:u)$. Therefore, we assume that $z_y=z_x=z_{x'}=z_{y'}$.

If the induced subgraph of $G$ on $\{x, y, x', y'\}$ is a complete graph, then as $z_x$ is adjacent to all of these vertices, we conclude that the induced subgraph of $G$ on $\{x, y, x', y', z_x\}$ is the complete graph $K_5$. It follows that for every minimal vertex cover $C$ of $G$, we must have$$|C\cap \{x, y, x', y', z_x\}|\geq 4.$$This implies that$$z_xu=z_xxyx'y'\in \mathfrak{p}_C^4,$$for every $C\in \mathcal{C}(G)$. Thus, $z_xu\in I(G)^{(4)}$. Consequently, $z_x\in X_0$, and therefore, we have $u\in (X_0)$. Hence, assume that there are two vertices, say $a$ and $b$, in $\{x, y, x', y'\}$, such that $ab\notin I(G)$. Let $c$ and $d$ denote the two vertices of $\{x, y, x', y'\}\setminus\{a, b\}$. As $ab\notin I(G)$, we have either $ac, bd\in I(G)$, or $ad, bc\in I(G)$.

Set $v':=v/z_x$ and denote by $A'$ the set of variables dividing $v'$. Assume that $A'\cup\{a, b\}$ is an independent subset of vertices of $G$ (in particular, $c, d, z_x\notin A'$). Then $V(G)\setminus (A'\cup \{a, b\})$ is a vertex cover of $G$. Let $C'$ be a minimal vertex of $G$ which is contained in $V(G)\setminus (A'\cup \{a,b\})$. It follows that$$C'\cap (A\cup \{x, y, x', y'\})\subseteq\{c, d, z_x\}.$$ Since, $c, d, z_x\notin A'$, we have$${\rm deg}_c(vxyx'y')={\rm deg}_d(vxyx'y')={\rm deg}_{z_x}(vxyx'y')=1.$$Hence, $vxyx'y'=vu\notin \mathfrak{p}_{C'}^4$ and this is a contradiction. Therefore, $A'\cup\{a, b\}$ is not an independent subset of vertices of $G$. As, $A'\subseteq A$ is an independent subset of vertices of $G$, we deduce that there is a vertex $z\in A'$ with $az\in I(G)$, or $bz\in I(G)$. Without lose of generality suppose $az\in I(G)$. As we mentioned above, either $bc\in I(G)$, or $bd\in I(G)$. If $bc\in I(G)$, then$$zz_xu=zz_xxyx'y'=(az)(dz_x)(bc)\in I(G)^3.$$This implies that $zz_x\in (I(G)^3:u)$. As $v$ is divisible by $zz_x$, we conclude that $v\in (I(G)^3:u)$. Similarly, if $bd\in I(G)$, then $$zz_xu=(az)(cz_x)(bd)\in I(G)^3,$$which again implies that $v\in (I(G)^3:u)$.
\end{proof}

The following example shows that the assertion of Lemma \ref{fouthr} is not true if $G$ has a gap.

\begin{exmp}
Let $G$ be the graph with vertex set $V(G)=\{x_1, \ldots, x_7\}$ and edge set$$E(G)=\{x_1x_2, x_1x_3, x_2x_3, x_3x_4, x_4x_5, x_5x_6, x_5x_7, x_6x_7\}.$$Set $u=(x_1x_2)(x_6x_7)$. Then $x_3x_5\in (I(G)^{(4)}:u)$, while $x_3x_5\notin (I(G)^3:u)+(X_0)$, where $X_0$ is the subset of variable defined in Lemma \ref{fouthr}. Note that the edges $x_1x_2$ and $x_6x_7$ form a gap in $G$.
\end{exmp}

We are now ready to prove the main result of this section.

\begin{thm} \label{fococh}
Let $G$ be a graph such that the complementary graph $\overline{G}$ is chordal. Then$${\rm reg}\big(I(G)^{(s)}\big)=2s,$$for every $s\in \{2, 3, 4\}$.
\end{thm}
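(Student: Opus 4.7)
The cases $s=2$ and $s=3$ are immediate consequences of Corollary \ref{twth}: since Fr\"oberg's theorem gives ${\rm reg}(I(G))=2$ when $\overline{G}$ is chordal, that corollary delivers ${\rm reg}(I(G)^{(2)})\leq 4$ and ${\rm reg}(I(G)^{(3)})\leq 6$, while the matching lower bound ${\rm reg}(I(G)^{(s)})\geq 2s$ is standard (one may exhibit a degree-$2s$ minimal generator of $I(G)^{(s)}$ such as $(xy)^s$ for a suitable edge $xy$, or invoke an induced-matching type lower bound together with the fact that a gap-free graph with an edge has induced matching number $1$). So the real work lies in the case $s=4$. My plan is to apply Theorem \ref{1main} with $s=3$, which gives
$${\rm reg}\bigl(I(G)^{(4)}\bigr)\leq \max\bigl\{8,\; {\rm reg}\bigl(I(G)^{(4)}+I(G)^3\bigr)\bigr\},$$
reducing the whole theorem to the estimate ${\rm reg}\bigl(I(G)^{(4)}+I(G)^3\bigr)\leq 8$.

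For this I will run a second filtration, modelled on the proof of Lemma \ref{rfirst}, but now indexed by the minimal monomial generators $G(I(G)^2)=\{w_1,\ldots,w_t\}$, ordered according to Banerjee's \cite[Theorem 4.12]{b}. Put $J_i=I(G)^{(4)}+I(G)^3+(w_1,\ldots,w_i)$, so $J_0=I(G)^{(4)}+I(G)^3$ and $J_t=I(G)^{(4)}+I(G)^2=I(G)^2$, the last equality coming from the inclusions $I(G)^{(4)}\subseteq I(G)^{(3)}\subseteq I(G)^2$ that were noted in the paragraph preceding Corollary \ref{twth}; by Herzog--Hibi--Zheng we have ${\rm reg}(J_t)=4$. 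Iterating the short exact sequences
$$0\longrightarrow \bigl(S/(J_{i-1}:w_i)\bigr)(-4)\longrightarrow S/J_{i-1}\longrightarrow S/J_i\longrightarrow 0$$
bounds ${\rm reg}(J_0)$ by the maximum of ${\rm reg}(J_t)=4$ and the numbers ${\rm reg}(J_{i-1}:w_i)+4$, so the task reduces to proving ${\rm reg}(J_{i-1}:w_i)\leq 2$ for every $i$.

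Here Lemma \ref{fouthr} is the decisive tool. By the Banerjee ordering, each colon $(J_{i-1}:w_i)$ equals $(I(G)^{(4)}:w_i)+(I(G)^3:w_i)$ modulo the adjunction of some variables. Since $\overline{G}$ chordal forces $G$ to be gap-free (a gap in $G$ would be an induced $C_4$ in $\overline{G}$), Lemma \ref{fouthr} collapses the symbolic summand into the ordinary one, giving $(J_{i-1}:w_i)=(I(G)^3:w_i)+(\text{variables})$. Applying \cite[Lemma 2.10]{b} to peel off those variables and then Banerjee's colon-regularity estimate ${\rm reg}(I(G)^3:w_i)\leq {\rm reg}(I(G))$ for $w_i\in G(I(G)^2)$ over gap-free graphs (the ordinary-power counterpart of the present Lemma \ref{lemreg}, proved in \cite{b}) delivers the required $\leq 2$. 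Collecting the estimates yields ${\rm reg}(I(G)^{(4)}+I(G)^3)\leq 6$, hence ${\rm reg}(I(G)^{(4)})\leq 8$; combined with the lower bound, this closes the case $s=4$. The main obstacle I anticipate is the bookkeeping of the Banerjee ordering, namely confirming that each intermediate colon $(J_{i-1}:w_i)$ really does decompose as $(I(G)^3:w_i)+(\text{variables})$ so that Lemma \ref{fouthr} applies cleanly, and that the required ordinary-power colon bound for $I(G)^3$ over gap-free graphs is accessible in precisely the form invoked; once these ingredients are in place the filtration telescopes without further surprise.
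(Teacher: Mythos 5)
Your proposal follows the paper's proof almost line for line: the cases $s=2,3$ via Corollary \ref{twth} and Fr\"oberg, the reduction of $s=4$ through Theorem \ref{1main} to bounding ${\rm reg}(I(G)^{(4)}+I(G)^3)$, the filtration over the Banerjee-ordered generators of $I(G)^2$ ending at $I(G)^{(4)}+I(G)^2=I(G)^2$ (via Proposition \ref{cont} and Herzog--Hibi--Zheng), and the use of Lemma \ref{fouthr} (with gap-freeness coming from chordality of $\overline{G}$) to collapse each colon to $(I(G)^3:w_i)+(\text{variables})$. The one point where you diverge is the final ingredient: you invoke a bound ${\rm reg}(I(G)^3:w_i)\leq {\rm reg}(I(G))$ ``for gap-free graphs, proved in \cite{b}'', described as the ordinary-power counterpart of Lemma \ref{lemreg}. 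No such statement for merely gap-free graphs is available in \cite{b}; Banerjee's colon estimates of this type carry an additional hypothesis (cricket-freeness), and note that co-chordal graphs need not be cricket-free (the cricket itself has chordal complement), so the theorem statement there does not literally apply either. Moreover, a blanket colon bound for all gap-free graphs would, fed into Banerjee's Theorem 5.2, settle Conjecture \ref{conj1} for gap-free graphs, which is not known. What the paper actually uses is the specific fact, extracted from the \emph{proof} of \cite[Theorem 6.16]{b}, that ${\rm reg}(I(G)^3:u_i)=2$ for the graphs at hand; if you replace your citation by that fact (which you flagged yourself as the point needing confirmation), your argument coincides with the paper's and is complete.
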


\begin{proof}
We know from Fr${\rm \ddot{o}}$berg's result \cite[Theorem 1]{f} that ${\rm reg}(I(G))=2$. For $s=2,3$, we conclude from Corollary \ref{twth} that$$2s\leq {\rm reg}(I(G)^{(s)})\leq {\rm reg}(I(G))+2s-2=2s.$$Thus, the assertion follows in these cases.

We now assume that $s=4$. By Theorem \ref{1main},$$8\leq{\rm reg}(I(G)^{(4)})\leq \max\big\{8, {\rm reg}\big(I(G)^{(4)}+I(G)^3\big)\big\}.$$Thus, it is enough to show that$${\rm reg}\big(I(G)^{(4)}+I(G)^3\big)\leq 8.$$Indeed, we prove$${\rm reg}\big(I(G)^{(4)}+I(G)^3\big)\leq 6.$$

Let $G(I(G)^2)=\{u_1, \ldots, u_m\}$ denote the set of minimal monomial generators of $I(G)^2$. Using \cite[Theorem 4.12]{b}, we may assume that for every pair of integers $1\leq j< i\leq m$, one of the following conditions hold.
\begin{itemize}
\item [(i)] $(u_j:u_i) \subseteq (I(G)^3:u_i)$; or
\item [(ii)] there exists an integer $k\leq i-1$ such that $(u_k:u_i)$ is generated by a subset of variables, and $(u_j:u_i)\subseteq (u_k:u_i)$.
\end{itemize}
Consequently, for every integer $i\geq 2$, we have
\[
\begin{array}{rl}
\big((I(G)^3, u_1, \ldots, u_{i-1}):u_i\big)=(I(G)^3:u_i)+({\rm some \ variables}).
\end{array} \tag{3} \label{3}
\]

For every integer $i$ with $0\leq i\leq m$, set$$I_i:=(I(G)^{(4)}+I(G)^3, u_1, \ldots, u_i).$$In particular, $I_0=I(G)^{(4)}+I(G)^3$. Consider the exact sequence
$$0\rightarrow S/(I_{i-1}:u_i)(-4)\rightarrow S/I_{i-1}\rightarrow S/I_i\rightarrow 0,$$
for every $1\leq i\leq m$. It follows that$${\rm reg}(I_{i-1})\leq \max \big\{{\rm reg}(I_{i-1}:u_i)+4, {\rm reg}(I_i)\big\}.$$Therefore,
\[
\begin{array}{rl}
{\rm reg}(I(G)^{(4)}+I(G)^3)={\rm reg}(I_0)\leq \max\big\{{\rm reg}(I_{i-1}:u_i)+4, 1\leq i\leq m, {\rm reg}(I_m)\big\}.
\end{array} \tag{4} \label{4}
\]
Note that $I_m=I(G)^{(4)}+I(G)^2$. On the other hand, we know from Proposition \ref{cont} that$$I(G)^{(4)}\subseteq I(G)^{(3)}\subseteq I(G)^2.$$Thus, $I_m=I(G)^2$. Consequently, ${\rm reg}(I_m)=4$. Hence, inequality (\ref{4}) implies that
\[
\begin{array}{rl}
{\rm reg}(I(G)^{(4)}+I(G)^3)\leq \max\big\{{\rm reg}(I_{i-1}:u_i)+4, 1\leq i\leq m\big\}.
\end{array} \tag{5} \label{5}
\]
As the complementary graph $\overline{G}$ has no induced $4$-cycle, it follows that $G$ is a gap-free graph. Therefore, using Lemma \ref{fouthr} and equality (\ref{3}), we conclude that for every integer $i$ with $1\leq i\leq m$,
\begin{align*}
& (I_{i-1}:u_i)=\big((I(G)^{(4)}+I(G)^3, u_1, \ldots, u_{i-1}):u_i\big)\\ & =\big((I(G)^{(4)}+I(G)^3):u_i\big)+\big((I(G)^3, u_1, \ldots, u_{i-1}):u_i\big)\\ & =(I(G)^3:u_i)+({\rm some \ variables}).
\end{align*}
It then follows from \cite[Lemma 2.10]{b} that
\[
\begin{array}{rl}
{\rm reg}(I_{i-1}:u_i) \leq {\rm reg}(I(G)^3:u_i).
\end{array} \tag{6} \label{6}
\]
We know from the proof of \cite[Theorem 6.16]{b} that ${\rm reg}(I(G)^3:u_i)=2$. Hence, using inequality (\ref{6}), we have ${\rm reg}(I_{i-1}:u_i)\leq 2$. Finally, it follows from inequality (\ref{5}) that$${\rm reg}(I(G)^{(4)}+I(G)^3)\leq 6,$$and this completes the proof.
\end{proof}






\begin{thebibliography}{10}

\bibitem {ab} A. Alilooee , A. Banerjee, Powers of edge ideals of regularity three bipartite graphs, {J. Commut. Algebra}, {\bf 9} (2017), 441--454.

\bibitem {abs}  A. Alilooee, S. Beyarslan, S. Selvaraja, Regularity of powers of edge ideals of unicyclic graphs, {\it Rocky Mountain J. Math.}, {\bf 49} (2019), 699--728.

\bibitem {b} A. Banerjee, The regularity of powers of edge ideals, {\it J. Algebraic Combin.} {\bf 41} (2015), 303--321.

\bibitem {bbh1} A. Banerjee, S. Beyarslan, H. T. H${\rm \grave{a}}$, Regularity of edge ideals and their powers, Springer Proceedings in Mathematics \& Statistics {\bf 277} (2019), 17--52.

\bibitem {bbh} A. Banerjee, S. Beyarslan, H. T. H${\rm \grave{a}}$, Regularity of powers of edge ideals: from local properties to global bounds, preprint.

\bibitem {bn} A. Banerjee, E. Nevo, Regularity of edge ideals via suspension, preprint.

\bibitem {bht} S. Beyarslan, H. T. H${\rm \grave{a}}$, T. N. Trung, Regularity of powers of forests and cycles, {\it J. Algebraic Combin.} {\bf 42} (2015), 1077--1095.

\bibitem {cjnp} Y. Cid-Ruiz, S. Jafari, N. Nemati, B. Picone, Regularity of bicyclic graphs and their powers, {\it J. Algebra Appl.}, to appear.

\bibitem {cht} D. Cutkosky, J. Herzog,  N. V. Trung, Asymptotic behaviour of Castelnuovo-Mumford regularity,  {\it Compositito Math.} {\bf 118} (1999), 243--261.

\bibitem {dhs} H. Dao, C. Huneke, J. Schweig, Bounds on the regularity and projective dimension of ideals associated to graphs, {\it J. Algebraic Combin.} {\bf 38} (2013), 37--55.

\bibitem {e1} N. Erey, Powers of edge ideals with linear resolutions, {\it Comm. Algebra}, {\bf 46} (2018), 4007--4020.

\bibitem {e2} N. Erey, Powers of ideals associated to ($C_4,2K_2$)-free graphs, {\it J. Pure Appl. Algebra} {\bf 223} (2019), 3071--3080.

\bibitem {f} R. Fr${\rm \ddot{o}}$berg, On Stanley-Reisner rings, in: Topics in algebra, {\it Banach Center Publications}, {\bf 26}
Part 2, (1990), 57--70.

\bibitem {g} Y. Gu, Regularity of powers of edge ideals of some graphs, {\it Acta Math. Vietnam.} {\bf 42} (2017), 445--454.

\bibitem {ghos} Y. Gu,  H. T. H${\rm \grave{a}}$, J. L. O'Rourke, J. W. Skelton, Symbolic powers of edge ideals of graphs, preprint.

\bibitem {ha} H. T. H${\rm \grave{a}}$, Regularity of squarefree monomial ideals, In S.M. Copper and S. Sather-Wagstaff(Ed.) Connections Between Algebra, Combinatorics, and Geometry. Springer Proceedings in Mathematics  Statistics {\bf 76} (2014), 251--276.

\bibitem {hh} J. Herzog, T. Hibi, {\it Monomial Ideals}, Springer-Verlag,
    2011.

\bibitem {hhz} J. Herzog, T. Hibi, X. Zheng, Monomial ideals whose powers
    have a linear resolution, {\it Math. Scand.} {\bf 95} (2004), no. 1,
    23--32.

\bibitem {ht} L. T. Hoa, N. D. Tam, On some invariants of a mixed product of ideals, {\it Arch. Math. (Basel)} {\bf 94} (2010), 327--337.

\bibitem {jk} A. V. Jayanthan, R. Kumar, Regularity of Symbolic Powers of Edge Ideals, preprint.

\bibitem {jns} A. V. Jayanthan, N. Narayanan, S. Selvaraja, Regularity of powers of bipartite graphs, {\it J. Algebraic Combin.}, {\bf 47} (2018), 17--38.

\bibitem {js} A. V. Jayanthan, S. Selvaraja, Linear polynomial for the regularity of powers of edge ideals of very well-covered graphs, {\it J. Commut. Algebra}, to apper.

\bibitem {k} M. Katzman, Characteristic-independence of Betti numbers of graph ideals, {\it J. Combin. Theory, Ser. A} {\bf 113} (2006), 435--454.

\bibitem {ko} V. Kodiyalam. Asymptotic behaviour of  Castelnuovo-Mumford regularity, {\it Proc. Amer. Math. Soc.} {\bf 128} (2000), 407--411.

\bibitem {msy} M. Moghimian, S. A. Seyed Fakhari, S. Yassemi, Regularity of powers of edge ideal of whiskered cycles, {\it Comm. Algebra}, {\bf 45} (2017), 1246--1259.

\bibitem {p'} I. Peeva, {\it Graded syzygies}, Algebra and Applications, vol. 14, Springer-Verlag London Ltd., London, 2011.

\bibitem {rty} G. Rinaldo, N. Terai, K. Yoshida, Cohen-Macaulayness for symbolic power ideals of edge ideals, {\it J. Algebra} {\bf 347} (2011), 1--22.

\bibitem {s3} S. A. Seyed Fakhari, Symbolic powers of cover ideal of very well-covered and bipartite graphs, {\it Proc. Amer. Math. Soc.} {\bf 146} (2018), 97--110.

\bibitem {s8} S. A. Seyed Fakhari, Regularity of symbolic powers of edge ideals of unicyclic graphs, submitted.

\bibitem {s11} S. A. Seyed Fakhari, Regularity of symbolic powers of edge ideals of chordal graphs, submitted.

\bibitem {s9} S. A. Seyed Fakhari, Regularity of symbolic powers of edge ideals of Cameron-Walker graphs, submitted.

\bibitem {sy} S. A. Seyed Fakhari, S. Yassemi, Improved bounds for the regularity of edge ideals of graphs, {\it Collect. Math.} {\bf 69} (2018), 249--262.

\bibitem {svv} A. Simis, W. Vasconcelos, R. H. Villarreal, On the ideal theory ofgraphs, {\it J. Algebra} {\bf 167} (1994), 389--416.

\bibitem {wo}  R. Woodroofe, Matchings, coverings, and Castelnuovo-Mumford regularity, {\it J. Commut. Algebra} {\bf 6} (2014), 287--304.

\end{thebibliography}
\end{document}